\crefname{hypothesis}{Hypothesis}{Hypotheses}
\newcommand{\eps}{\epsilon}
\newcommand{\RR}{\mathbb{R}}
\newcommand{\grad}{\nabla}
\newcommand{\Div}{\nabla\cdot}
\newcommand{\bn}{\mathbf{n}}
\newcommand{\bw}{\mathbf{w}}
\newcommand{\bz}{\mathbf{z}}
\newcommand{\bX}{\mathbf{X}}
\newcommand{\cK}{\mathcal{K}}
\newcommand{\cV}{\mathcal{V}}
\newcommand{\ip}[2]{\left<#1,#2\right>}
\newcommand{\maxR}{R^{\bm{\oplus}}}
\newcommand{\minR}{R^{\bm{\ominus}}}
\newcommand{\iR}{R^{\bullet}}
\newcommand{\rSS}{r_{\text{SS}}}
\newcommand{\XX}{\ding{55}}
\newcommand{\dx}{\, \mathrm{d}x}
\DeclareMathOperator*{\argmin}{arg\,min}
\title{A full approximation scheme multilevel method \\ for nonlinear variational inequalities\thanks{Draft \today.
\funding{EB was supported by a Faculty Development Travel Award from United Academics, University of Alaska Fairbanks, and thanks Max Heldman for helpful comments. PEF was supported by the Engineering and Physical Sciences Research Council [EPSRC grants EP/R029423/1 and EP/W026163/1]. This work used the ARCHER2 UK National Supercomputing Service \href{https://www.archer2.ac.uk}{www.archer2.ac.uk}). PEF thanks Lawrence Mitchell for advice on implementation, and Jack D.~Betteridge for assistance with running Example \ref{ex:results:sia}.}}}
\author{Ed Bueler\thanks{Department of Mathematics and Statistics, University of Alaska Fairbanks, USA
  (\email{elbueler@alaska.edu}).}
\and Patrick E. Farrell\thanks{Mathematical Institute, University of Oxford, Oxford, UK
  (\email{patrick.farrell@maths.ox.ac.uk}).}}
\begin{document}

\maketitle

\begin{abstract}
We present the \emph{full approximation scheme constraint decomposition} (FASCD) multilevel method for solving variational inequalities (VIs).  FASCD is a common extension of both the full approximation scheme (FAS) multigrid technique for nonlinear partial differential equations, due to A.~Brandt, and the constraint decomposition (CD) method introduced by X.-C.~Tai for VIs arising in optimization.  We extend the CD idea by exploiting the telescoping nature of certain function space subset decompositions arising from multilevel mesh hierarchies.  When a reduced-space (active set) Newton method is applied as a smoother, with work proportional to the number of unknowns on a given mesh level, FASCD V-cycles exhibit nearly mesh-independent convergence rates, and full multigrid cycles are optimal solvers.  The example problems include differential operators which are symmetric linear, nonsymmetric linear, and nonlinear, in unilateral and bilateral VI problems.
\end{abstract}

\begin{keywords}
multigrid, variational inequalities, full approximation scheme, constraint decomposition
\end{keywords}

\begin{MSCcodes}
65K15, 35M86, 90C33
\end{MSCcodes}

\section{Introduction} \label{sec:intro}

The constraint decomposition (CD) methods of Tai \cite{Tai2003} are designed to solve variational inequality (VI) problems arising as the Karush--Kuhn--Tucker optimality conditions for the minimization of a convex functional over a convex set.  In a finite element (FE) context, these methods converge for multilevel and domain decompositions.  The multilevel CD method has almost grid-independent error bounds, and near-optimal complexity, for elliptic, linear obstacle problems \cite[Subsection 5.4]{Tai2003}, \cite[Theorem 4.6]{GraeserKornhuber2009}.

Here we extend Tai's multilevel CD method by replacing the coarser-level corrections with a full approximation scheme (FAS; \cite{Brandt1977,Bruneetal2015}) approach.  Specifically, our \emph{full approximation scheme constraint decomposition} (FASCD) method enjoys the following advantages:
\renewcommand{\labelenumi}{\emph{(\roman{enumi})}}
\begin{enumerate}
\item The FAS coarse corrections are adapted for general VI problems. The method requires neither an objective function nor linearity of the residual.
\item Box constraints, i.e.~both upper and lower obstacles, are allowed.
\item The method is strictly admissible, and so can be applied even if the residual operator is defined only on those functions which satisfy the constraints.
\item V-cycles (Algorithm \ref{alg:fascd}) and full multigrid (FMG; \cite{Trottenbergetal2001}) cycles (Algorithm \ref{alg:fascd-fmg}) are proposed.
\item We identify a ``telescoping'' subset property which can be exploited on the up-smoothing side of a V-cycle for greater performance.
\item The method is (nonlinear) smoother-agnostic.  When a (mesh-dependent) reduced-space Newton method \cite{BensonMunson2006} is used as relaxation, we demonstrate that application of FMG is an optimal solver for the examples in Section \ref{sec:results}.
\end{enumerate}

Prior multilevel algorithms for VIs have limitations compared to the FASCD framework.  The projected full approximation scheme (PFAS) algorithm of Brandt \& Cryer \cite{BrandtCryer1983} applies to linear complementarity problems and it uses projected (truncated) Gauss--Seidel smoothers.  While FASCD reduces to general, nonlinear FAS for PDEs when the inequality constraints are removed, PFAS reduces to linear multigrid.  Another work which combines FAS and VIs is \cite{HintermullerKopacka2011}, but the problems solved, and the approach, are completely different.  For an optimal control problem with an elliptic VI constraint, \cite{HintermullerKopacka2011} applies FAS to a system of PDEs derived from penalizing the VI and the control constraints; their V- and W-cycles are not altered in any manner to maintain admissibility.  On the other hand, the multilevel CD iteration of Tai \cite{Tai2003} is formulated in terms of an objective function, so as stated it only applies to optimization-type problems, and furthermore each of its coarser-level problems refers to the finest-level discretization.  The latter limitation is removed by Gr\"aser \& Kornhuber's \cite[Algorithm 4.2]{GraeserKornhuber2009} reformulation using defect constraints and monotone injection operators; see Section \ref{sec:cdmultilevel} below.  The monotone multigrid methods of Kornhuber and co-authors \cite{GraeserKornhuber2009,Kornhuber1994} appear to be formulated only with Gauss--Seidel type smoothers, and may have been applied only to optimization-type VI problems \cite[for example]{JouvetGraeser2013}.  Furthermore, unlike the fastest variants of monotone multigrid, no zeroing of the basis functions associated with the active set (truncation) is employed in FASCD, and this simplifies the implementation.  Nonetheless FASCD coarse grid corrections can update the estimate of the active set on the fine grid, and indeed the numerical evidence indicates that FASCD coarse corrections are effective even when the coarse meshes are far too coarse to resolve the active set.  On a problem with a complicated coincidence set, for which the truncated Newton/monotone multigrid methods were apparently designed \cite[problem 7.1.1]{GraeserKornhuber2009}, the performance of FASCD is at least as good (Example \ref{ex:results:classical}).  A limitation of FASCD is that, as formulated, it only solves bound-constrained VI problems.

In our experiments, FASCD V-cycles exhibit essentially the same mesh-independent convergence rates for VI problems that FAS multigrid V-cycles exhibit for the corresponding unconstrained PDE problems.  The FMG results in Section \ref{sec:results} show near-textbook multigrid efficiency, yielding solutions within discretization error in work comparable to a few smoother sweeps on the finest-level mesh \cite{BrandtLivne2011}.  Specifically, we see this performance for classical (unilateral, Laplacian) obstacle problems, $p$-Laplacian obstacle problems, and box-constrained advection-diffusion equations.  Furthermore, V-cycle results are largely insensitive to the geometric complexity of the coincidence set and free boundary.

All iterates in FASCD, on all mesh levels, are admissible, and thus the nonlinear operator in the VI needs only be defined for admissible states.  For example, we solve an ice sheet geometry problem (Examples \ref{ex:sia} and \ref{ex:results:sia}) in which the ice flow formulas are only meaningful for surface elevation iterates that do not penetrate the bedrock.  (Non-admissible methods, including semi-smooth methods \cite{BensonMunson2006}, would require unnatural modifications of the operator formula.)  FASCD solves this low-regularity ice sheet problem, wherein the solution has unbounded gradient at the free boundary, by a small, mesh-independent number of FMG iterations (Example \ref{ex:results:sia}). The same Example investigates the parallel weak scaling \cite{Bueler2021} of our implementation.

The paper is organized as follows.  In Section \ref{sec:vi} we recall the theory of coercive and monotone VIs, and describe several motivating examples.  Section \ref{sec:cd} reviews CDs and the iterations they induce.  Section \ref{sec:femultilevel} sets up multilevel FE hierarchies, and then Section \ref{sec:cdmultilevel} extends the CD method via defect constraints generated by monotone injection operators \cite{GraeserKornhuber2009}.  Here we propose an apparently new understanding of telescoping sets for up-smoothing, based on an ``incomplete'' CD and an associated iteration.  Section \ref{sec:vcycle} then states the V-cycle, with Section \ref{sec:implementation} addressing convergence criteria, FMG cycles, and our smoother choice.  Section \ref{sec:results} gives numerical results from our open source Python/Firedrake \cite{Rathgeberetal2016} implementation.

\section{Variational inequalities} \label{sec:vi}

Let $\cV$ be a real reflexive Banach space with norm $\|\cdot\|$ and topological dual space $\cV'$.  Denote the dual pairing of $\phi \in \cV'$ and $v\in\cV$ by $\ip{\phi}{v} = \phi(v)$, and define the (Banach space) norm on $\cV'$ by $\|\phi\|' = \sup_{\|v\|=1} |\!\ip{\phi}{v}\!|$.  Let $\cK \subset \cV$ be a nonempty, closed, and convex subset, the \emph{constraint set}; elements of $\cK$ are said to be \emph{admissible}.  For a continuous, but generally nonlinear, operator $f:\cK \to \cV'$ and a linear \emph{source functional} $\ell\in \cV'$ we consider the following \emph{variational inequality} (VI) problem: find $u^\star\in \cK$ such that
\begin{equation}
\ip{f(u^\star)}{v-u^\star} \ge \ip{\ell}{v-u^\star} \quad \text{for all } v\in \cK. \label{eq:vi}
\end{equation}
Because $f$ is (generally) nonlinear, the source term $\ell$ is not strictly needed in stating this class of problems---by redefining $f$ we may take $\ell=0$---but introducing $\ell$ clarifies the algorithms which follow.

VI \eqref{eq:vi} generalizes nonlinear systems of equations $f(u^\star)=\ell$ to problems where $u^\star$ is also constrained to be in $\cK$.  Pretending the dual pairing is an inner product, by \eqref{eq:vi} the angle between $f(u^\star)-\ell$ and any arbitrary vector $v-u^\star$ pointing from $u^\star$ into $\cK$ is at most $90^\circ$; the dual vector $f(u^\star)-\ell$ must point into $\cK$.  If $u^\star$ is in the interior of the constraint set then \eqref{eq:vi} implies $f(u^\star)=\ell$.
Variational inequalities may generally permit multiple solutions~\cite{Farrell2019}.

The following definitions are standard \cite{KinderlehrerStampacchia1980}.

\begin{definition}  An operator $f:\cK \to \cV'$ is \emph{monotone} if
\begin{equation}
\ip{f(u)-f(v)}{u-v} \ge 0 \qquad \text{for all } u,v \in \cK, \label{eq:monotone}
\end{equation}
\emph{strictly monotone} if equality in \eqref{eq:monotone} implies $u=v$, and \emph{coercive} if there exists $w \in \cK$ so that
\begin{equation}
\frac{\ip{f(u)-f(w)}{u-w}}{\|u-w\|} \to +\infty \qquad \text{as } \|u\|\to +\infty. \label{eq:coercive}
\end{equation}
\end{definition}

It is well-known that if $f:\cK \to \cV'$ is continuous, monotone, and coercive then VI \eqref{eq:vi} has a solution \cite[Corollary III.1.8]{KinderlehrerStampacchia1980}, and that the solution is unique when $f$ is strictly monotone.  As is standard in the calculus of variations \cite{Evans2010}, coercivity permits a compactness argument for unbounded sets $\cK$; recall that the closed and bounded subsets of a reflexive Banach space are weakly compact.  Some VIs solved in this paper satisfy a stronger inequality than \eqref{eq:coercive}.

\begin{definition}  Let $p>1$.  The map $f:\cK \to \cV'$ is \emph{$p$-coercive} if there exists $\kappa>0$ such that
\begin{equation}
\ip{f(u)-f(v)}{u-v} \ge \kappa \|u-v\|^p \qquad \text{for all } u,v \in \cK. \label{eq:pcoercive}
\end{equation}
\end{definition}

It is easy to see that if $f$ is $p$-coercive then it is monotone, strictly monotone, and coercive.  Thus if $f:\cK \to \cV'$ is continuous and $p$-coercive then there exists a unique $u^\star\in \cK$ solving VI \eqref{eq:vi}.

Let $\Omega \subset \RR^d$ denote a bounded, open set with piecewise-smooth boundary.  Sobolev spaces \cite{Evans2010} will be denoted by $W^{k,p}(\Omega)$, for integer $k$ and $1\le p \le \infty$, with $W^{1,p}_0(\Omega)$ denoting the subspace with zero trace.  The following example includes the classical and $p$-Laplacian obstacle problems \cite{ChoeLewis1991}.

\begin{example}  \label{ex:plaplacian}  Let $1<p<\infty$.  For $u,v \in \cV = W^{1,p}_0(\Omega)$, define $f:\cV \to \cV'$ by
\begin{equation}
\ip{f(u)}{v} = \int_\Omega |\grad u|^{p-2} \grad u \cdot \grad v\dx, \label{eq:plaplacian}
\end{equation}
a continuous map \cite[Theorem A.0.6]{Peral1997}.  It follows from inequalities for $|\cdot|^p$ in $\RR^d$, and the Poincar\'e inequality, that $f$ is $p$-coercive \cite{ChoeLewis1991}; \cite[Appendix A]{Bueler2021conservation}.  \end{example}

For monotone operators $f$, VI \eqref{eq:vi} generalizes the problem of minimizing a convex function over $\cK$.  Suppose $F:\cK \to \RR$ is lower semi-continuous and (G\^ateaux) differentiable with continuous derivative $F':\cK \to \cV'$.  Then $F$ is convex if and only if $F'$ is monotone \cite[Proposition I.5.5]{EkelandTemam1976}.  Furthermore, Proposition II.2.1 in \cite{EkelandTemam1976} shows that if $F$ is convex then \eqref{eq:vi} holds for $f=F'$ if and only if
\begin{equation}
u^\star = \argmin_{v\in\cK} F(v) - \ip{\ell}{v}. \label{eq:minimization}
\end{equation}
The CD methods of Tai \cite{Tai2003} address problem \eqref{eq:minimization}; the analysis in \cite{Tai2003} supposes that an objective $F$ exists with $F'$ 2-coercive.  For Example \ref{ex:plaplacian} we may define
\begin{equation}
F(v) = \frac{1}{p} \int_\Omega |\grad v|^p\dx. \label{eq:plaplacianobjective}
\end{equation}
Then $F$ is a convex functional and $F'(v) = f(v)$ for $f$ given in \eqref{eq:plaplacian}.  For any closed and convex $\cK\subset \cV$, VI problem \eqref{eq:vi} is then equivalent to optimization problem \eqref{eq:minimization}.

Next we give two examples which are \emph{not} of optimization type \eqref{eq:minimization}.  The first is an advection-diffusion problem; it requires the following Lemma.

\begin{lemma}  \label{lem:advectionskew}  \cite{Elmanetal2014}\,  Suppose $\bX :\Omega \to \RR^d$ is a bounded and boundedly-differentiable vector field on $\Omega$ with $\Div \bX=0$.  For $u,v \in W^{1,2}(\Omega)$ let $b(u,v) = \int_\Omega (\bX \cdot \grad u) v\dx$.  Then $b(u,u) = \frac{1}{2} \int_{\partial \Omega} u^2 \bX\cdot \bn\dx$ where $\bn$ is the outward normal on $\partial \Omega$.
\end{lemma}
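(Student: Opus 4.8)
The plan is to reduce the bilinear form $b(u,u)$ to the integral of a pure divergence and then apply the divergence theorem. First I would use the elementary chain-rule identity $u\,\grad u = \tfrac{1}{2}\grad(u^2)$, so that the integrand factors as $(\bX\cdot\grad u)\,u = \tfrac{1}{2}\,\bX\cdot\grad(u^2)$. Substituting into the definition of $b$ gives
\[
b(u,u) = \frac{1}{2}\int_\Omega \bX\cdot\grad(u^2)\dx.
\]

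Next I would bring in the divergence-free hypothesis. The product rule for the vector field $u^2\bX$ reads $\Div(u^2\bX) = \bX\cdot\grad(u^2) + u^2\,\Div\bX$, and since $\Div\bX = 0$ the final term vanishes, leaving $\bX\cdot\grad(u^2) = \Div(u^2\bX)$. Hence
\[
b(u,u) = \frac{1}{2}\int_\Omega \Div(u^2\bX)\dx = \frac{1}{2}\int_{\partial\Omega} u^2\,\bX\cdot\bn\dx,
\]
where the last equality is the divergence theorem on $\Omega$. This is exactly the claimed formula, so the entire content of the proof is in justifying the two displayed manipulations.

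The only genuine subtlety, and the step I expect to require care, is that $u$ is merely in $W^{1,2}(\Omega)$ rather than smooth, so the chain rule and the divergence theorem must be justified at this regularity. Here the hypothesis that $\bX$ is bounded and boundedly-differentiable is essential: it places $\bX \in W^{1,\infty}(\Omega;\RR^d)$, and since $u^2 \in W^{1,1}(\Omega)$ with weak gradient $2u\,\grad u \in L^1$ (a product of two $L^2$ functions, by Cauchy--Schwarz), the field $u^2\bX$ lies in $W^{1,1}(\Omega;\RR^d)$, for which the Gauss--Green formula holds on the piecewise-smooth domain $\Omega$, the boundary integrand being read through the trace. Cleanest, I would argue by density: choose $u_n \in C^\infty(\overline{\Omega})$ with $u_n \to u$ in $W^{1,2}(\Omega)$, note the identity holds classically for each $u_n$, and pass to the limit. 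The left-hand side converges because $\bX$ is bounded and $u_n, \grad u_n \to u, \grad u$ in $L^2$; the right-hand side converges because the trace operator $W^{1,2}(\Omega)\to L^2(\partial\Omega)$ is continuous, whence $u_n^2 \to u^2$ in $L^1(\partial\Omega)$. Both the finiteness of the boundary integral and the validity of the limiting process thus rest on these regularity facts, which is where the hypotheses on $\bX$ do their work.
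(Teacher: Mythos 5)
Your proof is correct, but it takes a slightly different route from the paper's. The paper proves the more general bilinear identity $b(u,v) = -b(v,u) + \int_{\partial\Omega} uv\,\bX\cdot\bn\dx$ by a single integration by parts (moving the derivative from $u$ onto $v\bX$ and using $\Div\bX = 0$), and then obtains the stated formula by setting $v = u$, so that $2b(u,u)$ equals the boundary term. You instead work directly on the diagonal: the chain-rule identity $u\,\grad u = \tfrac12\grad(u^2)$ turns the integrand into half a total divergence $\tfrac12\Div(u^2\bX)$, and the divergence theorem finishes. The two computations are close cousins, but they buy different things. The paper's version delivers the off-diagonal skew-symmetry of $b$ up to boundary terms, which is exactly what is invoked later in Example \ref{ex:advectiondiffusion} (both for coercivity and for the observation $\ip{f(u)}{v} - \ip{f(v)}{u} = -2b(u,v)$ showing $f$ is not a gradient); your argument, confined to $b(u,u)$, would not by itself supply that identity. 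On the other hand, your proof is more careful on a point the paper leaves implicit: the justification of the chain rule, the Gauss--Green formula for the $W^{1,1}$ field $u^2\bX$, and the density-plus-trace-continuity limiting argument at $W^{1,2}$ regularity. That extra rigor is welcome; just be aware that if you were writing this lemma for use in the rest of the paper, you would want to state and prove the bilinear identity, not only its diagonal consequence.
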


\begin{proof}
Integration by parts gives $b(u,v) = - b(v,u) + \int_{\partial \Omega} uv \bX\cdot \bn\dx$.
\end{proof}

\begin{example}  \label{ex:advectiondiffusion}  Suppose $\partial\Omega$ is partitioned into Dirichlet and Neumann portions, $\partial\Omega = \partial_D\Omega \cup \partial_N\Omega$, with $\partial_D\Omega$ of positive Hausdorff measure.  Let $\cV \subset W^{1,2}(\Omega)$ be the space of functions which have zero trace on $\partial_D\Omega$.  Consider a velocity field $\bX$ on $\Omega$ satisfying the conditions of Lemma \ref{lem:advectionskew}, and assume that the flow is outward on the Neumann boundary: $\bX \cdot \bn \ge 0$ on $\partial_N\Omega$.  For $u,v \in \cV$, $\eps>0$, and $\phi \in \cV'$, define
\begin{equation}
\ip{f(u)}{v} = \eps \left(\grad u, \grad v\right)_{L^2(\Omega)} + b(u,v) - \ip{\phi}{v}. \label{eq:advectiondiffusion}
\end{equation}
With this operator, consider VI \eqref{eq:vi} with $\ell = 0$ for any closed and convex $\cK \subset \cV$. The interior condition in strong form is the following linear advection-diffusion equation
\begin{equation}
-\eps \grad^2 u + \bX\cdot \grad u = \phi.
\label{eq:advectiondiffusionstrong}
\end{equation}
On the other hand, it is easy to see that $|\!\ip{f(u)}{v}\!| \le \left((\eps + \|\bX\|_\infty) \|u\| + \|\phi\|'\right) \|v\|$, so that $f:\cK \to \cV'$ is continuous.  Lemma \ref{lem:advectionskew} says that the bilinear form $b(u,v)$ is skew-symmetric up to a term which is nonnegative by the outward flow assumption.  From the Poincar\'e inequality,
\begin{align*}
\ip{f(u)-f(v)}{u-v} &= \eps \int_\Omega |\grad u - \grad v|^2\dx + b(u-v,u-v) \\
                    &= \eps \int_\Omega |\grad u - \grad v|^2\dx + \frac{1}{2} \int_{\partial_N\Omega} (u-v)^2 \bX\cdot\bn \ge \eps C \|u-v\|^2.
\end{align*}
Thus $f$ is 2-coercive, and so the associated VI problem is well-posed. If $\bX \ne 0$ then $f\ne F'$ for any objective $F$, because $\ip{f(u)}{v}$ does not possess the symmetry of a gradient: for $u,v$ which are zero on $\partial \Omega$, note that $\ip{f(u)}{v} - \ip{f(v)}{u} = -2 b(u,v)$.  References \cite{Bueler2021conservation,ChangNakshatrala2017} consider similar advection-diffusion VI problems over $\cK = \{v\ge 0\}$. \end{example}

The next VI example is a model for steady ice sheet geometry in a given climate.  It is fully nonlinear and not of optimization type.  Related VI problems arise for any fluid layer subject to processes which can remove fluid mass \cite{Bueler2021conservation}.

\begin{example}  \label{ex:sia}  Let $\Omega \subset \RR^2$ be a fixed region of land, and suppose $b \in C^1(\Omega)$ is a given bedrock elevation.  Let $a \in L^2(\Omega)$ denote a given ``surface mass balance'' function, the annually-averaged rate of ice accumulation (snow) minus melt and runoff.  Then the surface elevation $s\in C(\Omega)$ of a steady-state, isothermal, and non-sliding ice sheet (glacier) under the \emph{shallow ice approximation} (SIA) \cite{GreveBlatter2009} satisfies the following VI problem: find $s \in \mathcal{K} = \{s\ge b\}$ such that
\begin{equation}
\int_\Omega \Gamma (s-b)^{n+2} |\grad s|^{n-1} \grad s \cdot \grad (v-s) \dx \ge \int_\Omega a (v-s)\dx \quad \text{ for all } v \in \mathcal{K}, \label{eq:siavi}
\end{equation}
where $n>1$ and $\Gamma>0$ are physical constants for ice; $n=3$ is a typical value \cite{GreveBlatter2009}.  The doubly-nonlinear and doubly-degenerate operator here is
\begin{equation}
\ip{f(s)}{v} = \int_\Omega \Gamma (s-b)^{n+2} |\grad s|^{n-1} \grad s \cdot \grad v\dx. \label{eq:siaoperator}
\end{equation}
If the bed is flat ($b=0$) then a power transformation converts \eqref{eq:siaoperator} into form \eqref{eq:plaplacian} with $p=n+1$.  Less is understood for general beds $b$, but existence holds for the VI \cite{JouvetBueler2012}, and a power of the solution lives in a Sobolev space: $(s-b)^{2p/(p-1)} \in W^{1,p}(\Omega)$.  This theory, and also observations of ice sheets, shows that $|\grad s|$ is usually unbounded as one approaches the free boundary from the icy ($s>b$) side.
\end{example}

The numerical results in Section \ref{sec:results} will apply FASCD Algorithms \ref{alg:fascd} and \ref{alg:fascd-fmg} to the above Examples.

\section{Constraint decomposition (CD)} \label{sec:cd}

Suppose that there are $m<\infty$ closed linear subspaces $\cV_i \subset \cV$, so that the sum
\begin{equation}
\cV = \sum_{i=0}^{m-1} \cV_i \label{eq:subspacedecomp}
\end{equation}
holds in the sense that if $w \in \cV$ then there exist $w_i \in \cV_i$ so that $w = \sum_i w_i$.  We say \eqref{eq:subspacedecomp} is a \emph{space decomposition} of $\cV$ \cite{Xu1992}.  For a closed, convex subset $\cK \subset \cV$, suppose further that $\cK_i \subset \cV_i$ are nonempty, closed, and convex subsets such that
\begin{equation}
\cK = \sum_{i=0}^{m-1} \cK_i. \label{eq:constraintdecomp}
\end{equation}
The sum in \eqref{eq:constraintdecomp} must hold in two senses \cite{TaiTseng2002}: \emph{(i)}~if $w \in \cK$ then there exist $w_i \in \cK_i$ so that $w = \sum_i w_i$, and \emph{(ii)}~if $z_i \in \cK_i$ for each $i$ then $\sum_i z_i \in \cK$.  Note that neither decomposition \eqref{eq:subspacedecomp} or \eqref{eq:constraintdecomp} is required to be unique, and also notice that sense \emph{(ii)} is automatic for \eqref{eq:subspacedecomp} because the $\cV_i$ are subspaces.  Finally, suppose there are bounded, generally nonlinear, \emph{decomposition operators} $\Pi_i : \cK \to \cK_i$ such that if $v \in \cK$ then
\begin{equation}
v = \sum_{i=0}^{m-1} \Pi_i v,  \label{eq:constraintrestrictionsum}
\end{equation}
with the decomposition operators satisfying the stability property
\begin{equation} \label{eq:decompositionstability}
\left(\sum_{i=0}^{m-1} \| \Pi_i u - \Pi_i v \|^2\right)^{1/2} \lesssim \|u - v\|.
\end{equation}
Clearly \eqref{eq:constraintrestrictionsum} implies sense \emph{(i)} for \eqref{eq:constraintdecomp}.  A \emph{constraint decomposition} (CD) of $\cK$ is a choice of $\cV_i,\cK_i,\Pi_i$ satisfying \eqref{eq:subspacedecomp}--\eqref{eq:decompositionstability} \cite{Tai2003}.  Figure \ref{fig:cartoon} suggests how a low-dimensional example might look; note $\cK_i \not\subset \cK$ in this and many other cases.

\begin{figure}[ht]
\centering
\includegraphics[width=0.45\textwidth]{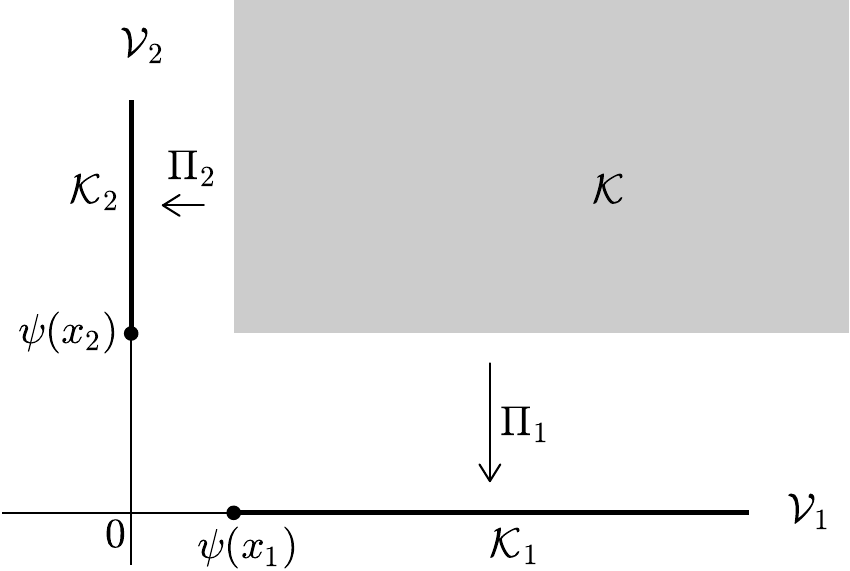}
\caption{A constraint decomposition (CD) for a unilateral obstacle problem on a two-point space $\Omega=\{x_1,x_2\}$, with $\mathcal{V}=\left\{v \,:\, \Omega \to \RR\right\}$ and $\mathcal{K}=\{v\ge \psi\}$.}
\label{fig:cartoon}
\end{figure}

We will construct a multilevel FE CD discretization in Sections \ref{sec:femultilevel}--\ref{sec:implementation}.  However, the CD concept can be applied at the level of the continuum problem.  For example, in a unilateral obstacle problem ($\cK = \{v \ge \psi\}$) one can construct an overlapping domain decomposition of $\cV$ by using a smooth, finite partition of unity $\{\varphi_i\}$, subspaces $\cV_i = \{w \in \cV : w = 0 \text{ if } \varphi_i = 0\}$, constraint sets $\cK_i = \{v \in \cV_i : v \ge \varphi_i \psi\}$, and decomposition operators $\Pi_i(v) = \varphi_i v$.  One may also construct a CD which is a disjoint frequency decomposition of a Hilbert space of periodic functions on a cube $\Omega=(0,1)^d$.  The multilevel CD, over a finite-dimensional FE discretization, will, in a vague sense, approximate such a frequency decomposition.

Associated to any CD are certain iterative methods \cite{Tai2003,Xu1992}.  The following multiplicative/successive \pr{cd-mult} algorithm, which generalizes the Gauss--Seidel iteration, replaces $u \in \cK$ with a new iterate $w\in\cK$ which should be closer to $u^\star \in \cK$ solving \eqref{eq:vi}.  The idea is to solve smaller VI problems over each subset $\cK_i$.  Each subset solution corrects the global iterate immediately, so the ordering of the $\cK_i$ is important.

\begin{pseudo*}
\pr{cd-mult}(u)\text{:} \\+
    for $i = 0,\dots,m-1$: \\+
        \rm{find} $w_i\in \cK_i$ \rm{such that} \\+
            $\displaystyle \Big<f\Big(\sum_{j<i} w_j + w_i + \sum_{j>i} \Pi_j u\Big),\, v_i - w_i\Big> \ge \ip{\ell}{v_i - w_i} \textnormal{ for all } v_i \in \cK_i$ \\--
    return $w=\sum_i w_i\in\cK$
\end{pseudo*}

\noindent The sum ``$\sum_{j>i} \Pi_j u$'' should be read as ``those parts of $u$ which need improvement''.  Tai \cite{Tai2003} also proposes an additive/parallel version of the algorithm, but it is not used here.

The VI problems in \pr{cd-mult} reduce to constrained line searches when the $\cV_i$ are one-dimensional.  If $e_i = w_i - \Pi_i u \in \cV_i$ denotes the correction from $\cK_i$, then the new iterate $w = u + \sum_i e_i$ is computed by adding a correction from each sub\emph{space} $\cV_i$, but so that $w \in \cK$; the corrections preserve admissibility.  Furthermore, if VI problem \eqref{eq:vi} corresponds to convex optimization \eqref{eq:minimization} then \pr{cd-mult} generates monotonically-decreasing objective values.

\begin{lemma} \cite{Tai2003}  Suppose $f=F'$ for $F:\cK\to\RR$ convex and differentiable, and let $u\in\cK$.  The output $w \in \cK$ from \pr{cd-mult} satisfies
\begin{equation}
F(w) - \ip{\ell}{w} \le F(u) - \ip{\ell}{u}.  \label{eq:objectivemonotone}
\end{equation}
\end{lemma}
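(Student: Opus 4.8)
The plan is to show the functional $J(\cdot) := F(\cdot) - \ip{\ell}{\cdot}$ never increases across one pass of the loop in \pr{cd-mult}, and then to telescope over $i=0,\dots,m-1$. First I would name the intermediate global iterates the algorithm implicitly forms: for $0\le i\le m$ set $u^{(i)} = \sum_{j<i} w_j + \sum_{j\ge i}\Pi_j u$. The restriction sum \eqref{eq:constraintrestrictionsum} gives $u^{(0)} = \sum_j \Pi_j u = u$, while $u^{(m)} = \sum_j w_j = w$. Each $u^{(i)}$ is a sum of one element drawn from each $\cK_j$ (namely $w_j\in\cK_j$ for $j<i$ and $\Pi_j u\in\cK_j$ for $j\ge i$), so sense \emph{(ii)} of \eqref{eq:constraintdecomp} ensures $u^{(i)}\in\cK$, and hence $F(u^{(i)})$ is defined. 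The key structural fact is that the operator $f$ in the $i$-th subproblem is evaluated precisely at $u^{(i+1)}$, while the step it induces, $e_i := u^{(i+1)} - u^{(i)} = w_i - \Pi_i u$, lies in the subspace $\cV_i$.

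Second, I would pair $f(u^{(i+1)})$ with $e_i$ in two ways. Testing the $i$-th VI with the admissible vector $v_i = \Pi_i u\in\cK_i$ produces $\ip{f(u^{(i+1)})}{\Pi_i u - w_i}\ge \ip{\ell}{\Pi_i u - w_i}$, that is,
\[
\ip{f(u^{(i+1)})}{e_i} \le \ip{\ell}{e_i}.
\]
Since $F$ is convex and differentiable with $F' = f$, the first-order convexity inequality $F(y)\ge F(x) + \ip{f(x)}{y-x}$, applied at $x = u^{(i+1)}$ and $y = u^{(i)}$ (both in $\cK$), rearranges to
\[
F(u^{(i+1)}) - F(u^{(i)}) \le \ip{f(u^{(i+1)})}{e_i}.
\]

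Third, chaining these two displays and using $\ip{\ell}{e_i} = \ip{\ell}{u^{(i+1)}} - \ip{\ell}{u^{(i)}}$ gives the single-step estimate $J(u^{(i+1)}) \le J(u^{(i)})$. Summing over $i=0,\dots,m-1$ telescopes to $J(w)\le J(u)$, which is exactly \eqref{eq:objectivemonotone}.

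I expect no genuine analytic obstacle; the argument is one convexity estimate plus careful bookkeeping. The two points that require care are purely organizational: tracking that $f$ is evaluated at the \emph{already-updated} iterate $u^{(i+1)}$ (so that the same argument appears in both the VI test and the convexity inequality), and invoking sense \emph{(ii)} of the constraint decomposition to keep every intermediate iterate admissible, so that $F$, whose domain is $\cK$, is defined throughout.
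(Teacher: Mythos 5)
Your proof is correct; there is, however, no in-paper argument to compare it against, since the paper states this lemma with a citation to Tai \cite{Tai2003} and omits the proof entirely. Your bookkeeping is exactly right: the intermediate iterates $u^{(i)}$ are precisely the points at which \pr{cd-mult} evaluates $f$, sense \emph{(ii)} of \eqref{eq:constraintdecomp} keeps every $u^{(i)}$ in $\cK$ (needed both so that $F$ is defined and so that $f$, whose domain is $\cK$, may legitimately be evaluated there), the test vector $v_i = \Pi_i u \in \cK_i$ is admissible in the $i$-th subproblem, and the first-order convexity inequality for the G\^ateaux-differentiable convex $F$ turns the resulting bound $\ip{f(u^{(i+1)})}{e_i} \le \ip{\ell}{e_i}$ into the single-step decrease of $F(\cdot)-\ip{\ell}{\cdot}$; telescoping finishes. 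The one substantive difference from the cited source is organizational rather than analytic: in \cite{Tai2003} the subspace problems are posed as \emph{minimizations} of the restricted objective over $\cK_i$, so the per-step decrease is immediate by comparing the minimizer $w_i$ with the feasible competitor $\Pi_i u$, no convexity inequality required. The present paper instead states the subproblems as VIs, and your extra step---passing from the variational inequality to the objective decrease via convexity (equivalently, one could invoke the VI/minimization equivalence the paper cites from Ekeland--Temam)---is exactly the bridge needed for that formulation; it buys a proof that works directly with the algorithm as printed here rather than with Tai's optimization-form subproblems.
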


\pr{cd-mult} is meaningful whether or not we are in the optimization case $f=F'$, and even if $f$ is nonlinear, non-local, or defined only on $\cK$, but its practical and efficient FE implementation for such general operators $f$ seems not to have been addressed in the literature.  In particular, references \cite{GraeserKornhuber2009,Tai2003} only apply the multilevel CD method to the classical obstacle problem, wherein $f=F'$ is linear, local, and defined on all of $\mathcal{V}$.  In Section \ref{sec:vcycle} we will extend multilevel CD iterations to general nonlinear operators by applying the full approximation scheme (FAS) idea of Brandt \cite{Brandt1977}, and then illustrate its application to general operators in Examples \ref{ex:results:plap}--\ref{ex:results:sia}.

From now on we restrict to the case of \emph{box constraints} \cite{BensonMunson2006,FerrisPang1997}, equivalently \emph{bilateral obstacle problems}, over $\mathcal{V}=W^{1,p}(\Omega)$.  The \emph{lower obstacle} $\underline{\gamma}$ and \emph{upper obstacle} $\overline{\gamma}$ are given measurable functions, defined on the closure $\overline{\Omega}$, with values in the extended real line $\tilde\RR = [-\infty,+\infty]$, satisfying the following properties: $\underline{\gamma} < +\infty$, $\overline{\gamma}>-\infty$, and $\underline{\gamma} \le \overline{\gamma}$.  Also suppose $\partial\Omega$ is split into disjoint sets $\partial\Omega = \partial_D \Omega \cup \partial_N \Omega$, with $\partial_D \Omega$ of positive Hausdorff measure, and $\partial_N \Omega$ possibly empty.  Dirichlet data is given by $g_D:\partial_D \Omega \to \RR$, and we assume $\underline{\gamma} \le g_D \le \overline{\gamma}$ on $\partial_D \Omega$.  Now define
\begin{equation}
\cK = \left\{w\,:\,\underline{\gamma} \le w \le \overline{\gamma} \, \text{ and }\, w\big|_{\partial_D \Omega} = g_D\right\} \subset \cV =W^{1,p}(\Omega), \label{eq:originalconstraintset}
\end{equation}
the closed and convex \emph{constraint set}; this treats inequality constraints and Dirichlet conditions in a unified manner.  Dirichlet values are in the trace sense \cite{Evans2010}, and $g_D$ is assumed to be as regular as needed.  Any Neumann conditions over $\partial_N \Omega$ are imposed weakly by the operator $f$, via an integral over $\partial_N\Omega$.  We will numerically solve VI problems \eqref{eq:vi} over constraint sets \eqref{eq:originalconstraintset}.

\section{Multilevel finite elements} \label{sec:femultilevel}

We seek to rapidly compute the approximate solution of box-constrained VI problems \eqref{eq:vi} and \eqref{eq:originalconstraintset} using finite elements (FE).  In this Section a nested multilevel $P_1$ or $Q_1$ element \cite{Elmanetal2014} approximation to \eqref{eq:vi} will be constructed by refinement of a coarse mesh.  The mesh-refinement hierarchy permits the construction of a multilevel CD of the finest-level constraint set.

The mesh hierarchy and FE spaces can be constructed by standard nested refinement.  Let $\Omega \subset \RR^d$ be an open bounded polygon and suppose $\mathcal{T}^0$ is the \emph{coarsest mesh}, a finite set of non-overlapping, nondegenerate cells with union equal to $\overline{\Omega}$ and maximum cell diameter (mesh size) $h_0>0$.  We assume $\mathcal{T}^0$ is conforming, with no hanging nodes.  For $j=1,\dots,J$, let $\mathcal{T}^j$ be the uniform refinement of $\mathcal{T}^{j-1}$; each $\mathcal{T}^{j-1}$ cell becomes $2^d$ cells in $\mathcal{T}^j$ with mesh size $h_j = 2^{-j} h_0$.  We call $\mathcal{T}^J$ the \emph{finest-level mesh} and $J+1$ the \emph{number of levels}.  Let $m_j$ be the number of vertices in $\mathcal{T}^j$, the \emph{degrees of freedom} at the $j$th level.

Let $\mathcal{V}^j$ be nested FE spaces over each mesh $\mathcal{T}^j$, with arbitrary boundary values:
\begin{equation}
\mathcal{V}^0 \subset \mathcal{V}^1 \subset \dots \subset \mathcal{V}^J \subset \mathcal{V}=W^{1,p}(\Omega).  \label{eq:fe:nestedspaces}
\end{equation}
As is common in the discretization of variational inequalities, we employ lowest-order continuous Lagrange elements $P_1$ or $Q_1$ on simplices and tensor-product cells respectively\footnote{Keith \& Surowiec have recently proposed a scheme for solving VIs using high-order finite elements~\cite{Keith2023}.}. This choice is convenient because it enjoys \emph{nodal monotonicity}:
\begin{equation}
\bw \ge \bz \quad \implies \quad w \ge z \label{eq:nodalmonotonicity}
\end{equation}
for all $w,z \in \mathcal{V}^j$, where $\bw,\bz \in \RR^{m_j}$ denote vectors of nodal values. In other words, to enforce an inequality constraint, with $P_1$ or $Q_1$ we only enforce it pointwise on the degrees of freedom. For $k\ge 2$ there are functions in the standard bases of $P_k$ and $Q_k$ which take on negative values \cite[Figure 1.7]{Elmanetal2014}, violating \eqref{eq:nodalmonotonicity}.

To construct box constraints on all meshes in the hierarchy, let $\tilde{\mathcal{V}}^j$ denote the set of functions on the vertices of $\mathcal{T}^j$ with values in $\tilde{\RR} = [-\infty,+\infty]$.  The finest-level \emph{lower} and \emph{upper obstacles} $\underline{\gamma}^J, \overline{\gamma}^J \in \tilde{\mathcal{V}}^J$ are constructed by suitable interpolation of $\underline{\gamma}$, $\overline{\gamma}$, e.g.~by applying the Cl\'ement interpolation operator~\cite{Carstensen2006}.  Again the obstacles must satisfy $\underline{\gamma}^J < +\infty$, $-\infty < \overline{\gamma}^J$, and $\underline{\gamma}^J \le \overline{\gamma}^J$.  On the finest-level mesh the discrete boundary data $g_D^J \in \mathcal{V}^J$ is assumed to satisfy $g_D^J = g_D$ on $\partial_D \Omega$---this restricts $g_D$ in \eqref{eq:originalconstraintset} to be piecewise-linear---and $\underline{\gamma}^J \le g_D^J \le \overline{\gamma}^J$ on those vertices of $\mathcal{T}^J$ which lie in $\partial_D \Omega$.  Then we define
\begin{equation}
\mathcal{K}^J = \big\{w\,:\,\underline{\gamma}^J \le w \le \overline{\gamma}^J \text{ on vertices of } \mathcal{T}^J, \, \text{ and } \, w|_{\partial_D\Omega} = g_D^J|_{\partial_D\Omega}\big\} \subset \mathcal{V}^J. \label{eq:fe:fineconstraintset}
\end{equation}
This nonempty, closed, and convex subset is the finest-level \emph{constraint set}.

Let $f^J:\mathcal{K}^J \to (\mathcal{V}^J)'$ be a discretization of $f$ in \eqref{eq:vi}.  Note that the functional $\ell^J=\ell$ is defined and continuous over $\mathcal{V}^J$.  We seek the FE solution $u^J \in \mathcal{K}^J$ of the finest-level, finite-dimensional, nonlinear VI
\begin{equation}
\ip{f^J(u^J)}{v-u^J} \ge \ip{\ell^J}{v-u^J} \qquad \text{for all } v\in \cK^J. \label{eq:fe:vi}
\end{equation}
If $f^J$ is continuous and $p$-coercive on $\mathcal{K}^J$, in the sense that \eqref{eq:pcoercive} holds for all $u,v \in \mathcal{K}^J$, then \eqref{eq:fe:vi} is well-posed.

While \eqref{eq:fe:vi} approximates \eqref{eq:vi}, we warn the reader that it is usually not conforming.  That is, even if quadrature and other implementation details for $f^J$ are ignored, and despite the vector space inclusion $\mathcal{V}^J \subset \mathcal{V}$, from definitions \eqref{eq:originalconstraintset} and \eqref{eq:fe:fineconstraintset} we do not expect $\mathcal{K}^J \subset \mathcal{K}$ in general.  Even for continuous obstacles $\underline{\gamma}, \overline{\gamma}$, admissibility with respect to the FE obstacles $\underline{\gamma}^J, \overline{\gamma}^J$ will generally not imply $\mathcal{K}$-admissibility.  For a practical example, nodal elevation data in a glacier application misses the topographic detail in-between the vertices \cite{Bueler2016}.

The multilevel algorithm defined in Section \ref{sec:vcycle} requires five transfer operators between meshes.  Of these, three are employed in standard FAS \cite{Trottenbergetal2001}; Table \ref{tab:transfers} will help the reader keep track.  Canonical prolongation $P:\mathcal{V}^{j-1}\to\mathcal{V}^j$ implements the vector space inclusion.  Canonical (dual) restriction $R:(\mathcal{V}^j)'\to(\mathcal{V}^{j-1})'$ acts as an identity in the sense that if $\sigma \in (\mathcal{V}^j)'$ then $\ip{R\sigma}{z} = \ip{\sigma}{z}$ for all $z \in \mathcal{V}^{j-1} \subset \mathcal{V}^j$.  These canonical transfers generally involve a change of FE representation, and in particular $R$ is ``full-weighting'' \cite{Trottenbergetal2001}.  To define injection $\iR:\mathcal{V}^j\to\mathcal{V}^{j-1}$, note that the vertices of $\mathcal{T}^{j-1}$ are vertices of $\mathcal{T}^j$ and so for $z\in\mathcal{V}^j$ define $\iR z$ as the element of $\mathcal{V}^{j-1}$ with the same (coarse) nodal values.  Injection is monotonic in the sense that if $w \le z$ then $\iR w \le \iR z$.

\begin{table}[H]
\centering
\begin{tabular}{llc}
\toprule
\emph{name}  & \emph{mapping}  & \emph{linear?} \\ \midrule
canonical prolongation        & $P:\mathcal{V}^{j-1}\to\mathcal{V}^j$ & \,\checkmark \\
canonical (dual) restriction  & $R:(\mathcal{V}^j)'\to(\mathcal{V}^{j-1})'$ & \,\checkmark \\
(nodal) injection             & $\iR:\mathcal{V}^j\to\mathcal{V}^{j-1}$ & \,\checkmark \\
maximum injection           & $\maxR:\tilde{\mathcal{V}}^j\to\tilde{\mathcal{V}}^{j-1}$ & \ding{55} \\
minimum injection           & $\minR:\tilde{\mathcal{V}}^j\to\tilde{\mathcal{V}}^{j-1}$ & \ding{55} \\
\bottomrule
\end{tabular}
\caption{Transfer operators used in this paper.}
\label{tab:transfers}
\end{table}

The monotone injections $\maxR,\minR$ \cite{Tai2003,GraeserKornhuber2009}, which will only be applied to constraints in $\tilde{\mathcal{V}}^j$, are nonlinear maps which maximize and minimize, respectively, nodal values over the open supports of coarser-mesh basis functions (i.e.~over the stars of the coarse vertices).  To make the construction precise, let $x_q^j$ denote the vertices of mesh $\mathcal{T}^j$, with corresponding basis functions $\psi_q^j$.  For an $\tilde\RR$-valued nodal function $z\in\tilde{\mathcal{V}}^j$ we define functions in $\tilde{\mathcal{V}}^{j-1}$:
\begin{align}
(\maxR z)(x_p^{j-1}) &= \max \{z(x_q^j) \,:\, \psi_p^{j-1}(x_q^j) > 0\}, \label{eq:fe:monotoneinjections} \\
(\minR z)(x_p^{j-1}) &= \min \{z(x_q^j) \,:\, \psi_p^{j-1}(x_q^j) > 0\}. \notag
\end{align}
These operators are constructed to enforce the property
\begin{equation}
\minR z \le z \le \maxR z,  \label{eq:fe:monotoneproperty}
\end{equation}
which does not hold for the ordinary injection operator $\iR$. An illustration of the construction on nested meshes is given in Figure \ref{fig:Rplusminus}; implementing these operators on non-nested meshes could be efficiently done using a supermesh~\cite{Farrell2011}. In the next Section we will use $\maxR,\minR$ to construct ``level defect constraints'' such that the zero function is always admissible.  This construction depends on additional ordering properties which follow from definition \eqref{eq:fe:monotoneinjections}:
\begin{equation}
z\ge 0 \implies \minR z \ge 0 \quad \text{ and } \quad z \le 0 \implies \maxR z \le 0. \label{eq:fe:monotoneadditional}
\end{equation}

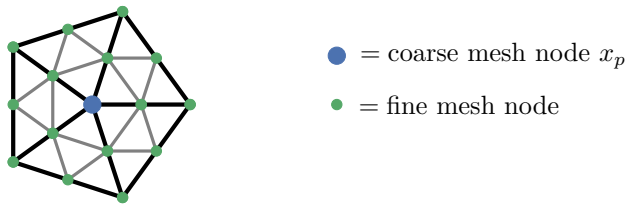
\begin{figure}[ht]
\centering
\definecolor{seabornblue}{rgb}{0.2980392156862745, 0.4470588235294118, 0.6901960784313725}
\definecolor{seaborngreen}{rgb}{0.3333333333333333, 0.6588235294117647, 0.40784313725490196}
\definecolor{seabornred}{rgb}{0.7686274509803922, 0.3058823529411765, 0.3215686274509804}

\begin{tikzpicture}[scale=1.3]
  \foreach \k in {0, 1, 2, 3, 4} {
    \node (A) at ({0.5*cos(deg(2*\k*pi/5)) + 0.5*cos(deg(2*(\k+1)*pi/5))}, {0.5*sin(deg(2*\k*pi/5)) + 0.5*sin(deg(2*(\k+1)*pi/5))}) {};
    \node (B) at ({0.5*cos(deg(2*\k*pi/5))}, {0.5*sin(deg(2*\k*pi/5))}) {};
    \node (C) at ({0.5*cos(deg(2*(\k+1)*pi/5))}, {0.5*sin(deg(2*(\k+1)*pi/5))}) {};
    \node (D) at ({cos(deg(2*\k*pi/5))}, {sin(deg(2*\k*pi/5))}) {};
    \node (E) at ({cos(deg(2*(\k+1)*pi/5))}, {sin(deg(2*(\k+1)*pi/5))}) {};

    \draw [-,gray, very thick] (A.center) -- (B.center);
    \draw [-,gray, very thick] (B.center) -- (C.center);
    \draw [-,gray, very thick] (C.center) -- (A.center);

    \draw [black, ultra thick] (0, 0) -- (D.center);
    \draw [black, ultra thick] (D.center) -- (E.center);

    \filldraw[seaborngreen] (A) circle (1.5pt);
    \filldraw[seaborngreen] (B) circle (1.5pt);
    \filldraw[seaborngreen] (C) circle (1.5pt);
    \filldraw[seaborngreen] (D) circle (1.5pt);
    \filldraw[seaborngreen] (E) circle (1.5pt);

  }
  \filldraw[seabornblue] (0, 0) circle (2.5pt);
  
  \filldraw[seabornblue] (2.5, 0.5) circle (2.5pt);
  \node at (4.1, 0.5) {$= \text{coarse mesh node $x_p$}$};
  \filldraw[seaborngreen] (2.5, 0.0) circle (1.5pt);
  \node (fine) at (3.75, 0.0) {$= \text{fine mesh node}$};
\end{tikzpicture}
\caption{The monotone injection operators $\maxR$ and $\minR$ assign to a coarse degree of freedom (blue) the maximum and minimum respectively of the fine function over the patch of coarse cells sharing its vertex. The coarse mesh is in black and the fine in grey.}
\label{fig:Rplusminus}
\end{figure}

\section{Multilevel CDs from level defect constraints} \label{sec:cdmultilevel}

A key choice in devising a multilevel scheme for VIs is how to construct bounds for the coarser levels.  Our multilevel CD scheme uses \emph{defect constraints} \cite{GraeserKornhuber2009} derived from an admissible finest-level iterate.  The next Example illustrates why we do \emph{not}\footnote{An exception occurs in FMG; see Section \ref{sec:implementation}.} apply injection operators directly to the finest-level box constraints $\underline{\gamma}^J,\overline{\gamma}^J$.

\begin{example}  \label{ex:directRbad}  
Let $\Omega = (0,L) \subset \RR^1$ and consider a two-level mesh hierarchy ($J=1$).  Let $\underline{\gamma}^1$ be a sawtooth function with minimum value zero, and suppose $\overline{\gamma}^1=C+\underline{\gamma}^1$ with $C>0$ strictly less than the maximum of $\underline{\gamma}^1$.  Figure \ref{fig:directRbad} shows such functions; here levels $\mathcal{T}^1$ and $\mathcal{T}^0$ have $5$ and $3$ vertices, respectively.  For appropriate boundary conditions the fine-level constraint set $\mathcal{K}^1$ defined by \eqref{eq:fe:fineconstraintset} is nonempty.  However, various schemes to coarsen the constraints by direct transfers of $\underline{\gamma}^1,\overline{\gamma}^1$ are doomed to failure.  Suppose we propose
\begin{equation}
    \label{eq:fe:badstandard}
    \underline{\gamma}^0, \overline{\gamma}^0 \equiv \iR \underline{\gamma}^1, \iR \overline{\gamma}^1. \qquad \text{\emph{[problematic]}}
\end{equation}
Then $\underline{\gamma}^0=0$ and $\overline{\gamma}^0=C$ identically.  In that case the corresponding constraint set $\mathcal{K}^0$ is nonempty, but prolongation $Pw^0$ of an admissible solution of the coarse problem ($w^0\in\mathcal{K}^0$; $\underline{\gamma}^0 \le w^0 \le \overline{\gamma}^0$) is not admissible on the finer level ($Pw^0 \notin \mathcal{K}^1$).  Attempting to fix this by projecting (truncating) into $\mathcal{K}^1$ would introduce high frequencies not present in $w^0$.  On the other hand, if
\begin{equation}
    \label{eq:fe:badmonotone}
    \underline{\gamma}^0, \overline{\gamma}^0 \equiv \maxR \underline{\gamma}^1, \minR \overline{\gamma}^1 \qquad \text{\emph{[problematic]}}
\end{equation}
then the coarse-level constraint set $\mathcal{K}^0$ is empty because $\underline{\gamma}^0 > \overline{\gamma}^0$ (Figure \ref{fig:directRbad}, right).
\end{example}

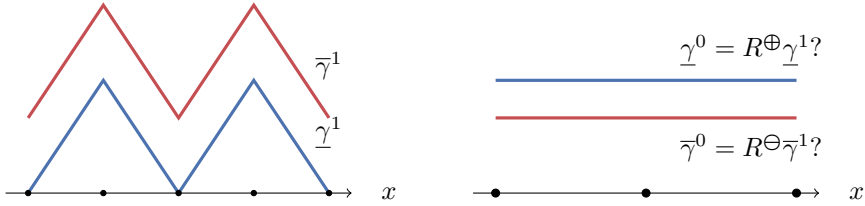
\begin{figure}[ht]
\centering
\begin{tabular}{cc}
\definecolor{seabornblue}{rgb}{0.2980392156862745, 0.4470588235294118, 0.6901960784313725}
\definecolor{seaborngreen}{rgb}{0.3333333333333333, 0.6588235294117647, 0.40784313725490196}
\definecolor{seabornred}{rgb}{0.7686274509803922, 0.3058823529411765, 0.3215686274509804}

\begin{tikzpicture}[scale=1.0]
  \draw[seabornblue, very thick] (0.0,0.0) -- (1.0,1.5) -- (2.0,0.0) -- (3.0,1.5) -- (4.0,0.0)
                     node [yshift=8mm] {{\color{black} $\underline{\gamma}^1$}};
  \draw[seabornred, very thick] (0.0,1.0) -- (1.0,2.5) -- (2.0,1.0) -- (3.0,2.5) -- (4.0,1.0)
                     node [yshift=7mm] {{\color{black} $\overline{\gamma}^1$}};
  \draw[black,thin,->] (-0.3,0.0) -- (4.3,0.0) node [xshift=5mm] {$x$};
  \foreach \x in {0.0, 1.0, 2.0, 3.0, 4.0} {
      \filldraw (\x,0.0) circle (1.0pt);
  }
\end{tikzpicture}
&
\definecolor{seabornblue}{rgb}{0.2980392156862745, 0.4470588235294118, 0.6901960784313725}
\definecolor{seaborngreen}{rgb}{0.3333333333333333, 0.6588235294117647, 0.40784313725490196}
\definecolor{seabornred}{rgb}{0.7686274509803922, 0.3058823529411765, 0.3215686274509804}

\begin{tikzpicture}[scale=1.0]
  \draw[seabornblue,very thick] (0.0,1.5) -- (4.0,1.5)
                     node [xshift=-6mm,yshift=4mm] {{\color{black} $\underline{\gamma}^0 = \maxR\underline{\gamma}^1$?}};
  \draw[seabornred,very thick] (0.0,1.0) -- (4.0,1.0)
                     node [xshift=-6mm,yshift=-4mm] {{\color{black} $\overline{\gamma}^0 = \minR\overline{\gamma}^1$?}};
  \draw[black,thin,->] (-0.3,0.0) -- (4.3,0.0) node [xshift=5mm] {$x$};
  \foreach \x in {0.0, 2.0, 4.0} {
      \filldraw (\x,0.0) circle (1.5pt);
  }
\end{tikzpicture}
\end{tabular}
\caption{For the finer-level constraints $\underline{\gamma}^1$ (blue) and $\overline{\gamma}^1$ (red) on the left, direct application of $\iR,\minR,\maxR$ to generate coarser-level constraints is problematic.  Possibility \eqref{eq:fe:badmonotone}, which generates an empty coarser-level set $\mathcal{K}^0$, is at right.  Defect constraints \eqref{eq:fe:defectconstraints} avoid this difficulty.}
\label{fig:directRbad}
\end{figure}

Iterate-dependent defect constraints will satisfy ordering \eqref{eq:fe:chiordering} below and bypass the difficulties in the above Example.  Suppose $w^J \in \cK^J$ is an admissible iterate for problem \eqref{eq:fe:vi}.  We define the \emph{finest-level defect constraints} \cite{GraeserKornhuber2009} as functions in $\tilde{\mathcal{V}}^J$:
\begin{equation}
\underline{\chi}^J = \underline{\gamma}^J - w^J, \qquad \overline{\chi}^J = \overline{\gamma}^J - w^J. \label{eq:fe:defectconstraints}
\end{equation}
(If $\underline{\gamma}^J=-\infty$ then $\underline{\chi}^J=-\infty$ by definition, and similarly if $\overline{\gamma}^J=+\infty$ then $\overline{\chi}^J=+\infty$.)  Definition \eqref{eq:fe:fineconstraintset} implies that $\underline{\chi}^J \le 0 \le \overline{\chi}^J$ everywhere in $\overline{\Omega}$, including on the Dirichlet boundary $\partial_D\Omega$.    Observe that a corrected iterate $w^J + z^J$ is admissible, $w^J + z^J \in \cK^J$, if and only if the perturbation $z^J \in \mathcal{V}^J$ is zero on $\partial_D\Omega$ and bounded by the defect constraints, $\underline{\chi}^J \le z^J \le \overline{\chi}^J$.

Now descending to coarser levels, $j=J-1,\dots,1,0$, we apply the monotone injections to generate \emph{level defect constraints} (LDCs) in $\tilde{\mathcal{V}}^j$:\footnote{The definition of $\underline{\chi}^j$ matches that in Section 4 of \cite{GraeserKornhuber2009}; see (4.22) and Figure 4.1 therein.}
\begin{equation}
\underline{\chi}^{j} = \maxR \underline{\chi}^{j+1}, \qquad \overline{\chi}^{j} = \minR \overline{\chi}^{j+1}. \label{eq:fe:chilevels}
\end{equation}
Observe that by \eqref{eq:fe:monotoneproperty} and \eqref{eq:fe:monotoneadditional},
\begin{equation}
\underline{\chi}^{J} \le \dots \le \underline{\chi}^0 \le 0 \le \overline{\chi}^0 \le \dots \le \overline{\chi}^J. \label{eq:fe:chiordering}
\end{equation}
The LDC $\underline{\chi}^j$ is the most-negative function from $\tilde{\mathcal{V}}^j$ such that if another element of $\tilde{\mathcal{V}}^j$ is above $\underline{\chi}^j$ then it is also above $\underline{\chi}^{j+1}$; similar comments apply to $\overline{\chi}^{j}$.  Note that for Algorithm \ref{alg:fascd} below, any system satisfying \eqref{eq:fe:chiordering} will suffice; the particular formulas in \eqref{eq:fe:chilevels} are not required.

This construction of multilevel box constraints, likewise the unilateral construction in \cite{GraeserKornhuber2009}, permits large, yet admissible, coarse corrections, which promotes multilevel solver efficiency.  When coarsening the problem downward in a V-cycle, one cannot predict the upcoming coarser corrections and so one must construct constraint sets from which any sum will be admissible.  With this in mind, and looking forward to telescoping sums \eqref{eq:fe:telescoping}, we compute the LDC differences \cite{GraeserKornhuber2009}:
\begin{equation}
\underline{\phi}^j = \underline{\chi}^j - \underline{\chi}^{j-1}, \qquad \overline{\phi}^j = \overline{\chi}^j - \overline{\chi}^{j-1}.  \label{eq:fe:philevels}
\end{equation}
(Take $\underline{\chi}^{-1}=\overline{\chi}^{-1}=0$ so that $\underline{\phi}^0=\underline{\chi}^0$ and $\overline{\phi}^0=\overline{\chi}^0$.  By definition, if $\underline{\chi}^j(x_p)=-\infty$ then we set $\underline{\phi}^j(x_p)=-\infty$ regardless of the value of $\underline{\chi}^{j-1}(x_p)$, and etc.)  Note that while $\underline{\phi}^{j},\overline{\phi}^{j} \in \tilde{\mathcal{V}}^J$ again bracket zero, they are not ordered as in \eqref{eq:fe:chiordering}.

The corresponding \emph{downward} and \emph{upward constraint sets}, respectively, are closed and convex subsets of the FE spaces $\mathcal{V}^j$:
\begin{align}
\mathcal{D}^j = \left\{v \in \mathcal{V}^j \,:\, \underline{\phi}^j \le v \le \overline{\phi}^j \text{ and } \, v|_{\partial_D\Omega} = 0\right\}, \label{eq:fe:constraintsets} \\
\mathcal{U}^j = \left\{v \in \mathcal{V}^j \,:\, \underline{\chi}^j \le v \le \overline{\chi}^j \text{ and } \, v|_{\partial_D\Omega} = 0\right\}. \notag
\end{align}
Because $\underline{\chi}^j \le \underline{\phi}^j \le 0 \le \overline{\phi}^j \le \overline{\chi}^j$, it follows that $\mathcal{D}^j \subseteq \mathcal{U}^j$.  Note that the finest-level set $\mathcal{U}^J$ is equivalent to the original constraint set $\mathcal{K}^J$: for all $z^J \in \mathcal{V}^J$ , $z^J \in \mathcal{U}^J$ if and only if $w^J+z^J \in \mathcal{K}^J$.

The V-cycle solver in the next Section (Algorithm \ref{alg:fascd}) will compute corrections from $\mathcal{D}^j$ and $\mathcal{U}^j$.  For general box constraints the upward direction is, however, based on \emph{incomplete} CDs as we now explain.  Each upward set $\mathcal{U}^j$ is generally only a superset of a sum of downward sets $\mathcal{D}^j$, though this becomes a true CD satisfying \eqref{eq:subspacedecomp}--\eqref{eq:decompositionstability} in unilateral cases.  The following Lemma, apparently new, extends the unilateral CD construction underlying Algorithm 4.2 in \cite{GraeserKornhuber2009}; see also equation (59) in \cite{Tai2003}.

\begin{lemma}  \label{lem:downwardadmissibility}
\begin{equation}
\mathcal{U}^j \supseteq \mathcal{D}^j + \mathcal{D}^{j-1} + \dots + \mathcal{D}^0 \label{eq:fe:downwardsuminclusion}
\end{equation}
for $j=0,1,\dots,J$.  In unilateral cases, where either $\underline{\gamma}^J=-\infty$ identically or $\overline{\gamma}^J=+\infty$ identically, inclusion \eqref{eq:fe:downwardsuminclusion} becomes a full CD with $\mathcal{U}^j=\sum_{i=0}^j \mathcal{D}^i$.
\end{lemma}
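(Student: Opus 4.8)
The plan is to prove the inclusion \eqref{eq:fe:downwardsuminclusion} directly from the telescoping definition \eqref{eq:fe:philevels}, and then to establish the reverse inclusion in the unilateral case by an explicit level-by-level construction built from a single monotone injection.

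First I would prove \eqref{eq:fe:downwardsuminclusion}. Fix $j$ and take $v_i \in \mathcal{D}^i$ for $i=0,\dots,j$; since $\mathcal{V}^i \subset \mathcal{V}^j$, the sum $v = \sum_{i=0}^j v_i$ lies in $\mathcal{V}^j$. The Dirichlet condition is immediate: each $v_i$ vanishes on $\partial_D\Omega$, hence so does $v$. For the obstacle bounds I would use nodal monotonicity \eqref{eq:nodalmonotonicity}: the inequality $\underline{\phi}^i \le v_i$ holding at the vertices of $\mathcal{T}^i$ implies, by the order-preservation of canonical prolongation, that it continues to hold at every vertex of $\mathcal{T}^j$ (identifying coarse functions with their prolongations), and similarly for the upper bound. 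Summing over $i$ and invoking the telescoping identity
\[ \sum_{i=0}^j \underline{\phi}^i = \sum_{i=0}^j \bigl(\underline{\chi}^i - \underline{\chi}^{i-1}\bigr) = \underline{\chi}^j \quad\text{and}\quad \sum_{i=0}^j \overline{\phi}^i = \overline{\chi}^j, \]
which holds by \eqref{eq:fe:philevels} and the convention $\underline{\chi}^{-1}=\overline{\chi}^{-1}=0$, yields $\underline{\chi}^j \le v \le \overline{\chi}^j$ on the vertices of $\mathcal{T}^j$. Thus $v \in \mathcal{U}^j$, proving \eqref{eq:fe:downwardsuminclusion}. This direction uses only that the $\underline{\phi}^i,\overline{\phi}^i$ are the successive differences of the $\underline{\chi}^i,\overline{\chi}^i$, not the specific formulas \eqref{eq:fe:chilevels}.

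For the reverse inclusion in the unilateral case, suppose $\underline{\gamma}^J=-\infty$, so all lower constraints are vacuous and only the upper obstacle is active; the opposite case is symmetric with $\minR$ in place of $\maxR$. Given $v \in \mathcal{U}^j$ (so $v \le \overline{\chi}^j$ nodally, with $v|_{\partial_D\Omega}=0$), I would build the decomposition from the ``slack'' functions. Set $t_j = v - \overline{\chi}^j \le 0$ and define $t_{i-1} = \maxR t_i$ for $i=j,\dots,1$. Property \eqref{eq:fe:monotoneadditional} gives $t_i \le 0$ for all $i$, and property \eqref{eq:fe:monotoneproperty} gives $t_i \le \maxR t_i = t_{i-1}$ on the vertices of $\mathcal{T}^i$. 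Recovering $s_i = t_i + \overline{\chi}^i$ and setting $v_0 = s_0$, $v_i = s_i - s_{i-1}$ for $i\ge 1$, the sum telescopes to $\sum_{i=0}^j v_i = s_j = v$, while
\[ v_i = (t_i - t_{i-1}) + \bigl(\overline{\chi}^i - \overline{\chi}^{i-1}\bigr) \le \overline{\chi}^i - \overline{\chi}^{i-1} = \overline{\phi}^i, \]
so each $v_i \in \mathcal{D}^i$; the case $i=0$ uses $t_0 \le 0$ and $\overline{\phi}^0 = \overline{\chi}^0$. This gives the matching inclusion $\mathcal{U}^j \subseteq \sum_{i=0}^j \mathcal{D}^i$ and hence equality.

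I expect the main obstacle to be the careful treatment of the Dirichlet condition in the reverse construction: $\maxR$ applied near $\partial_D\Omega$ mixes boundary and interior vertices, so $v_i|_{\partial_D\Omega}=0$ is not automatic, and the injection must be understood as acting on the interior degrees of freedom (equivalently, with boundary vertices pinned so that the corrections vanish there). I would also need to confirm that the $\pm\infty$ conventions in \eqref{eq:fe:defectconstraints}--\eqref{eq:fe:philevels} remain consistent throughout, and to remark on why equality fails for genuine box constraints: a single recursion through $\maxR$ propagates the upper bound but cannot simultaneously respect the lower bound (which would require $\minR$), so the two-sided constraints over-determine the decomposition and only the inclusion \eqref{eq:fe:downwardsuminclusion} survives.
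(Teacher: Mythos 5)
Your argument is essentially the paper's own proof. The forward inclusion is identical: telescoping \eqref{eq:fe:telescoping} and summing the nodal bounds. Your reverse construction in the unilateral case is the paper's decomposition operators \eqref{eq:fe:unilateraldecompositionoperator} in disguise: under the reflection $v\mapsto -v$, which exchanges upper and lower obstacles and exchanges $\maxR$ with $\minR$, your slack recursion $t_i$ becomes the iterated minimum injection $I_{j\to i}^\ominus(z^j-\underline{\chi}^j)$, and your $v_i=(t_i-t_{i-1})+\overline{\phi}^i$ becomes exactly $\Pi_i z^j$; the paper simply works the case $\overline{\gamma}^J=+\infty$ with $\minR$ where you work $\underline{\gamma}^J=-\infty$ with $\maxR$.

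There is one genuine, if small, omission. The lemma asserts a \emph{full CD}, and by the definition in Section \ref{sec:cd} a CD consists of all of \eqref{eq:subspacedecomp}--\eqref{eq:decompositionstability}; so besides the sum property \eqref{eq:constraintrestrictionsum}, which you establish, you must also verify the stability bound \eqref{eq:decompositionstability} for your decomposition maps $v\mapsto v_i$. Your proposal never mentions it. The paper disposes of it by citing \cite[Theorem 4.2]{GraeserKornhuber2009}, and since your maps coincide with the paper's up to reflection, the same citation closes this gap; but as written your proof does not deliver the full claim. On the other hand, the Dirichlet-trace concern you raise at the end is legitimate: with the injections defined by \eqref{eq:fe:monotoneinjections} and defect constraints that are nonzero on $\partial_D\Omega$, the pieces $v_i$ need not individually vanish on $\partial_D\Omega$ even though their sum does. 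The paper's proof is silent on this point as well (it checks only the obstacle bound $\Pi_i z^j\ge\underline{\phi}^i$), so flagging it, together with your proposed fix of pinning the boundary degrees of freedom in the injections, puts you ahead of, not behind, the printed argument.
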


\begin{proof}  From nesting \eqref{eq:fe:nestedspaces}, subspace decomposition \eqref{eq:subspacedecomp} holds, and definition \eqref{eq:fe:constraintsets} shows $\mathcal{D}^i \subset \mathcal{U}^i \subset \cV^i$.  Furthermore, telescoping sums hold for any $j=0,1,\dots,J$:
\begin{equation}
\sum_{i=0}^j \underline{\phi}^i = \underline{\chi}^j, \qquad \sum_{i=0}^j \overline{\phi}^i = \overline{\chi}^j.  \label{eq:fe:telescoping}
\end{equation}
Thus if $y^i \in \mathcal{D}^i$ for $0 \le i \le j$ then
\begin{equation}
\underline{\chi}^j = \sum_{i=0}^j \underline{\phi}^i \le \sum_{i=0}^j y^i \le \sum_{i=0}^j \overline{\phi}^i \le \overline{\chi}^j, \label{eq:fe:lemmaordering}
\end{equation}
so \eqref{eq:fe:downwardsuminclusion} holds for any box constraints.

In unilateral cases we can also construct decomposition operators $\Pi_i$ and thereby show \eqref{eq:constraintrestrictionsum}.  For concreteness suppose $\overline{\gamma}^J=+\infty$, thus $\overline{\chi}^j=+\infty$ and $\overline{\phi}^i = +\infty$ for all levels; the $\underline{\gamma}^J=-\infty$ case is similar.  For $v\in \mathcal{V}^j$ and $0\le i \le j$, let $I_{j\to i}^\ominus\,:\,\tilde{\cV}^j \to \tilde{\cV}^i$ be the operator applying the minimum (monotone) injection $j-i$ times: $I_{j\to i}^\ominus v = \minR(\cdots(\minR v)\cdots) $.  (Set $I_{j\to j}^\ominus v = v$ and $I_{j\to -1}^\ominus=0$ by definition.)  Now define nonlinear decomposition operators $\Pi_i:\mathcal{U}^j \to \mathcal{D}^i$:
\begin{equation}
\Pi_i z^j \coloneqq I_{j\to i}^\ominus(z^j - \underline{\chi}^j) - I_{j\to i-1}^\ominus(z^j - \underline{\chi}^j) + \underline{\phi}^i.  \label{eq:fe:unilateraldecompositionoperator}
\end{equation}
(Compare equation (4.9) in \cite{GraeserKornhuber2009}.)  Property \eqref{eq:fe:monotoneproperty} implies $I_{j\to i}^\ominus(z^j - \underline{\chi}^j) \ge I_{j\to i-1}^\ominus(z^j - \underline{\chi}^j)$, thus $\Pi_i z^j \ge \underline{\phi}^i$, so $\Pi_i z^j \in \mathcal{D}^i$, and furthermore stability \eqref{eq:decompositionstability} holds \cite[Theorem 4.2]{GraeserKornhuber2009}.  On the other hand, the following sum telescopes, leaving only its first term plus the sum in \eqref{eq:fe:telescoping}:
\begin{align*}
\sum_{i=0}^j \Pi_i z^j &= z^j - \underline{\chi}^j + \sum_{i=0}^j \underline{\phi}^i = z^j.
\end{align*}
This shows \eqref{eq:constraintrestrictionsum}, thus \eqref{eq:constraintdecomp}, and that \eqref{eq:fe:downwardsuminclusion} is equality.
\end{proof}

One might attempt to convert \eqref{eq:fe:downwardsuminclusion} into a full CD for arbitrary box constraints by constructing different decomposition maps $\Pi_i$, for example by splitting $z^j = \min\{z^j,0\} + \max\{z^j,0\}$ and applying unilateral formulas \eqref{eq:fe:unilateraldecompositionoperator} to $\min\{z^j,0\}$ and $\max\{z^j,0\}$ separately.  The above proof shows that the lower bound $\Pi_i (\min\{z^j,0\}) \ge \underline{\phi}^i$ indeed holds.  However, the following Example shows why $\Pi_i(\min\{z^j,0\})$ may have an arbitrarily large maximum, and thus exceed any finite upper LDC $\overline{\phi}^i$.  In fact, $\Pi_i$ defined above does not generally map all non-positive elements of $\mathcal{U}^J$ into $\mathcal{D}^i$ if a finite upper LDC is present.  The Example does not exclude other strategies for showing full CDs in the general box-constrained case.

\begin{example}  \label{ex:notfullcd}
Let $a > 0$ be an arbitrary positive number.  Consider a 2-level hierarchy ($J=1$), and suppose the lower defect constraint is the constant function $\underline{\chi}^1=-a$.  Note $\underline{\chi}^0=\maxR \underline{\chi}^1=-a$ also, so $\underline{\phi}^1=0$ and $\underline{\phi}^0=-a$.  Suppose $z^1$ is a ``sawtooth'' function which takes on value $-a$ on the coarse vertices and value $0$ on the other vertices.  Then, for any $\overline{\chi}^1\ge 0$, we have $\overline{\chi}^1 \ge 0 \ge z^1\ge \underline{\chi}^1$, so $z^1 \in \mathcal{U}^1$.  However, $I_{1\to 0}^\ominus(z^1 - \underline{\chi}^1) = \minR(z^1 - \underline{\chi}^1) = 0$ identically.  From \eqref{eq:fe:unilateraldecompositionoperator}, $\Pi_1 z^1$ simplifies to $\Pi_1 z^1 = z^1 - \underline{\chi}^1$.  But then $\Pi_1 z^1$ attains a maximum $a\ge 0$.  This maximum can be chosen to exceed any upper defect obstacle $\overline{\phi}^1$, and $\Pi_1 z^1 \notin \mathcal{D}^1$ would then occur.
\end{example}

Each upward set $\mathcal{U}^j$ can also be decomposed, incompletely in the sense of \eqref{eq:fe:downwardsuminclusion}, down to an arbitrary coarser level.  The next Lemma, which does not follow from Lemma \ref{lem:downwardadmissibility}, justifies upward admissibility in Algorithm \ref{alg:fascd}.

\begin{lemma}  \label{lem:upwardadmissibility}  For any $j=0,1,\dots,J$ and $0\le k\le j$,
\begin{equation}
\mathcal{U}^j \supseteq \mathcal{D}^j + \mathcal{D}^{j-1} + \dots + \mathcal{D}^{k+1} + \mathcal{U}^k \label{eq:fe:upwardsuminclusion}
\end{equation}
In unilateral cases, where either $\underline{\gamma}^J=-\infty$ or $\overline{\gamma}^J=+\infty$, actually $\mathcal{U}^j = \mathcal{U}^k + \sum_{i=k+1}^j \mathcal{D}^i$ is a full CD. \end{lemma}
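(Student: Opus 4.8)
The plan is to mirror the structure of the proof of Lemma \ref{lem:downwardadmissibility}, but with the telescoping truncated at level $k$ and with the final slot occupied by $\mathcal{U}^k$ rather than $\mathcal{D}^k$. I would first note why \eqref{eq:fe:upwardsuminclusion} does not reduce to \eqref{eq:fe:downwardsuminclusion}: collapsing the coarse tail would require $\mathcal{U}^k \subseteq \sum_{i=0}^k \mathcal{D}^i$, whereas in the general box-constrained case Lemma \ref{lem:downwardadmissibility} only delivers the reverse inclusion $\mathcal{U}^k \supseteq \sum_{i=0}^k \mathcal{D}^i$. Hence a direct argument is needed.

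For the inclusion itself, I would take arbitrary $y^i \in \mathcal{D}^i$ for $k+1 \le i \le j$ together with $u^k \in \mathcal{U}^k$ and bound the sum $\sum_{i=k+1}^j y^i + u^k$. Subtracting the telescoping identities \eqref{eq:fe:telescoping} at levels $j$ and $k$ yields the partial telescoping $\sum_{i=k+1}^j \underline{\phi}^i = \underline{\chi}^j - \underline{\chi}^k$ and the corresponding upper identity, with the $\pm\infty$ conventions of the paper understood. Combining $\underline{\phi}^i \le y^i \le \overline{\phi}^i$ with $\underline{\chi}^k \le u^k \le \overline{\chi}^k$, the coarse bounds $\underline{\chi}^k,\overline{\chi}^k$ cancel exactly against the partial sums, leaving $\underline{\chi}^j \le \sum_{i=k+1}^j y^i + u^k \le \overline{\chi}^j$. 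Since each summand vanishes on $\partial_D\Omega$ and lies in $\mathcal{V}^j$ by nesting \eqref{eq:fe:nestedspaces}, the sum lies in $\mathcal{U}^j$, establishing \eqref{eq:fe:upwardsuminclusion} for arbitrary box constraints.

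For the unilateral equality, say $\overline{\gamma}^J=+\infty$ so that $\overline{\chi}^\bullet=\overline{\phi}^\bullet=+\infty$ on every level, I would exhibit decomposition operators splitting any $z^j\in\mathcal{U}^j$. For $k+1\le i\le j$ I retain the operators \eqref{eq:fe:unilateraldecompositionoperator} from Lemma \ref{lem:downwardadmissibility}, which land in $\mathcal{D}^i$ by property \eqref{eq:fe:monotoneproperty} and the vacuous upper bound. The new ingredient is a coarsest operator designed to absorb the remaining correction, $\Pi_k z^j \coloneqq I_{j\to k}^\ominus(z^j - \underline{\chi}^j) + \underline{\chi}^k$. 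Since $z^j \ge \underline{\chi}^j$ gives $z^j - \underline{\chi}^j \ge 0$, the lower-bound half of \eqref{eq:fe:monotoneadditional} propagates nonnegativity through the repeated minimum injections, so $I_{j\to k}^\ominus(z^j - \underline{\chi}^j)\ge 0$ and hence $\Pi_k z^j \ge \underline{\chi}^k$; with the vacuous upper bound this places $\Pi_k z^j \in \mathcal{U}^k$. Membership of each factor in the correct FE space and the homogeneous Dirichlet condition on $\partial_D\Omega$ are inherited exactly as in the proof of Lemma \ref{lem:downwardadmissibility}.

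Finally I would verify reconstruction: writing $w = z^j - \underline{\chi}^j$, the interior operators telescope to $w - I_{j\to k}^\ominus w$, while $\sum_{i=k+1}^j \underline{\phi}^i = \underline{\chi}^j - \underline{\chi}^k$, so $\sum_{i=k+1}^j \Pi_i z^j = z^j - I_{j\to k}^\ominus w - \underline{\chi}^k$; adding $\Pi_k z^j$ cancels the $I_{j\to k}^\ominus w$ term and the constant $\underline{\chi}^k$, leaving $z^j$. Stability \eqref{eq:decompositionstability} then follows as in \cite[Theorem 4.2]{GraeserKornhuber2009}. I expect the main obstacle to be the design of the coarsest operator $\Pi_k$ so that its image lands in $\mathcal{U}^k$ rather than in $\mathcal{D}^k$: the lower-bound half of \eqref{eq:fe:monotoneadditional} is precisely what guarantees admissibility of the absorbed remainder, and it is this single modification that separates Lemma \ref{lem:upwardadmissibility} from Lemma \ref{lem:downwardadmissibility}.
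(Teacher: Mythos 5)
Your proposal is correct and follows essentially the same route as the paper: the inclusion via the partial telescoping of the LDC differences, and, in the unilateral case, retaining the operators \eqref{eq:fe:unilateraldecompositionoperator} for $i=k+1,\dots,j$ while replacing the coarsest slot by $\Pi_k z^j = I_{j\to k}^\ominus(z^j - \underline{\chi}^j) + \underline{\chi}^k$, which is exactly the modification the paper makes. Your write-up simply spells out the admissibility and reconstruction details that the paper compresses into ``otherwise the proof goes through as before.''
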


\begin{proof}  Definition \eqref{eq:fe:philevels} shows inclusion \eqref{eq:fe:upwardsuminclusion}.  In unilateral cases we may follow the proof of Lemma \ref{lem:downwardadmissibility} with \eqref{eq:fe:unilateraldecompositionoperator} unchanged for $i=k+1,\dots,j$, but modified in the $i=k$ case: $\Pi_k z^j = I_{j\to k}^\ominus(z^j - \underline{\chi}^j) + \underline{\chi}^k$.  Otherwise the proof goes through as before.
\end{proof}

Inclusions \eqref{eq:fe:downwardsuminclusion} and \eqref{eq:fe:upwardsuminclusion} show the admissibility of all iterates and corrections in Algorithm \ref{alg:fascd} below.  These inclusions are special to multilevel schemes based on defect constraints, and we have seen that they hold regardless of whether decomposition operators $\Pi_i$ can be constructed.  In fact we can propose a new, incomplete variant of the \pr{cd-mult} iteration.  For $w^J\in\mathcal{K}^J$, and existing downward perturbations $y^i\in\mathcal{D}^i$, the iteration applies in the special case where inclusion \eqref{eq:fe:upwardsuminclusion} holds:

\begin{pseudo*}
\pr{icd-tele}(w^J, \{y^i\}_{i=1}^J)\text{:} \\+
    for $j = 0,\dots,J$: \\+
        \rm{find} $z^j\in \mathcal{U}^j$ \rm{so that for all} $v \in \mathcal{U}^j$, \\+
            $\displaystyle \Big<f\Big(w^J + \sum_{i>j} y^i + z^j\Big),v-z^j\Big> \ge \ip{\ell}{v-z^j}$ \\--
    return $\tilde w^J=w^J + z^J$
\end{pseudo*}

\noindent
Note that the downward perturbations $y^i$, which apparently disappear as \pr{icd-tele} proceeds, can actually play a practical role in solving each VI problem, namely when constructing an initial iterate for $z^j$; see the next Section.

Algorithm \ref{alg:fascd} below incorporates \pr{icd-tele} for up-smoothing in the larger constraint sets $\mathcal{U}^j$.  The observation that this can be more efficient, illustrated by Example \ref{ex:results:plap} in particular, seems to be new.  For comparison, $V(1,1)$ cycles in \cite{GraeserKornhuber2009} use half-size versions of the smaller sets $\mathcal{D}^j$ both on descent and ascent; see also \cite[section 5.4]{Tai2003}.  However, any convergence analysis extending the arguments of \cite{GraeserKornhuber2009,Tai2003} might require full CDs.

\section{FASCD V-cycle} \label{sec:vcycle}

The V-cycle in this Section extends the multilevel CD schemes of Tai \cite{Tai2003} and Gr\"aser \& Kornhuber \cite[Algorithm 4.2]{GraeserKornhuber2009} by following a nonlinear full approximation scheme (FAS) approach \cite{BrandtLivne2011}; it reduces to FAS for PDEs when inequality constraints are removed.  Each V-cycle generates a correction to a finest-level iterate $w^J \in \mathcal{K}^J$, via information from each coarser FE subspace $\mathcal{V}^j$, by approximately solving VI problems in constraint sets $\mathcal{D}^j$ and $\mathcal{U}^j$ during the downward and upward stages, respectively.  The corrections are inductively constructed so that iterates remain admissible at every level.  A key idea is that up-smoothing solves less-restricted VI problems than down-smoothing.  Going downward one does not know what coarse corrections are coming, thus one must choose smaller constraint sets ($\mathcal{D}^j$) such that the correction sum is admissible once we return to a given level.  Going upward, however, no further coarse corrections can violate admissibility, so we relax in the larger sets $\mathcal{U}^j$.

Suppose $y^i \in \mathcal{D}^i$, for $i=J,\dots,j+1$, are already-computed corrections during the downward part of the V-cycle (Figure \ref{fig:fascdcycles}, left).  By \eqref{eq:fe:downwardsuminclusion} the correction at this point is admissible, namely $y^J + \dots + y^{j+1} \in \mathcal{U}^J$, equivalently $w^J + y^J + \dots + y^{j+1} \in \mathcal{K}^J$.  Relaxation then occurs at the next-coarser level, yielding the next correction $y^j \in \mathcal{D}^j$.  Solving the coarsest-level VI results in the full downward correction $y^J + \dots + y^1 + z^0 \in \mathcal{U}^J$.  Starting upward, $y^1 + z^0 \in \mathcal{U}^1$ is an initial iterate for relaxation on level $j=1$, which then yields $z^1 \in \mathcal{U}^1$ and an admissible correction $y^J + \dots + y^2 + z^1 \in \mathcal{U}^J$, using inclusion \eqref{eq:fe:upwardsuminclusion}.  Continuing in this ``telescoping'' way, up-smoothing in the constraint sets $\mathcal{U}^j$, starting from initial iterate $y^j+z^{j-1}$, finally generates $z^J\in \mathcal{U}^J$ as the V-cycle correction.  The new finest-level iterate is $\tilde{w}^J = w^J + z^J \in \mathcal{K}^J$.

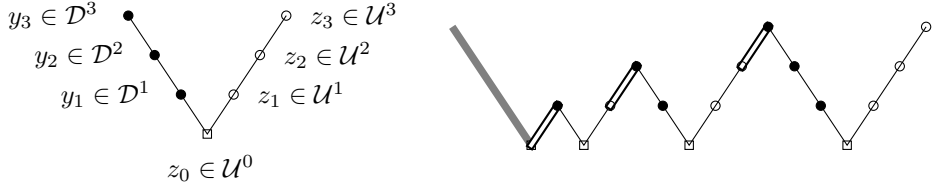
\begin{figure}[ht]
\centering
\begin{tabular}{cc}
\begin{tikzpicture}[scale=0.7]
  \pgfmathsetmacro\hstep{0.5}
  \pgfmathsetmacro\vstep{0.75}
  \pgfmathsetmacro\ceps{0.08}   

  \draw[black,thin] (-\hstep,3*\vstep) -- (0.0,2*\vstep) -- (\hstep,\vstep) --  (2*\hstep,0.0)
                    -- (3*\hstep,\vstep) -- (4*\hstep,2*\vstep) -- (5*\hstep,3*\vstep);
  \filldraw (-\hstep,3*\vstep) node[xshift=-10mm] {$y_3 \in \mathcal{D}^3$} circle (2.5pt);
  \filldraw (0.0,2*\vstep) node[xshift=-10mm] {$y_2 \in \mathcal{D}^2$} circle (2.5pt);
  \filldraw (\hstep,\vstep) node[xshift=-10mm] {$y_1 \in \mathcal{D}^1$} circle (2.5pt);
  \draw     (2*\hstep-\ceps,-\ceps) node[xshift=1mm,yshift=-4mm] {$z_0 \in \mathcal{U}^0$} rectangle (2*\hstep+\ceps,+\ceps);
  \draw     (3*\hstep,\vstep) node[xshift=9mm] {$z_1 \in \mathcal{U}^1$} circle (2.5pt);
  \draw     (4*\hstep,2*\vstep) node[xshift=9mm] {$z_2 \in \mathcal{U}^2$} circle (2.5pt);
  \draw     (5*\hstep,3*\vstep) node[xshift=9mm] {$z_3 \in \mathcal{U}^3$} circle (2.5pt);
\end{tikzpicture}
&
\begin{tikzpicture}[scale=0.7]
  \pgfmathsetmacro\hstep{0.5}
  \pgfmathsetmacro\vstep{0.75}
  \pgfmathsetmacro\ceps{0.08}   

  \pgfmathsetmacro\hoff{0*\hstep}
  \draw[shift={(\hoff,0)},gray,line width=1.0mm] (0.0,3*\vstep) -- (\hstep,2*\vstep) --  (2*\hstep,\vstep) -- (3*\hstep,0.0);
  \draw[shift={(\hoff,0)}]     (3*\hstep-\ceps,-\ceps) rectangle (3*\hstep+\ceps,+\ceps);
  \draw[shift={(\hoff,0)},black,thick,double,double distance between line centers=0.8mm,line cap=rect] (3*\hstep,0.0) -- (4*\hstep,\vstep);

  \pgfmathsetmacro\hoff{4*\hstep}
  \draw[shift={(\hoff,0)},black,thin] (0.0,\vstep) -- (\hstep,0.0) -- (2*\hstep,\vstep);
  \draw[shift={(\hoff,0)},black,thick,double,double distance between line centers=0.8mm,line cap=rect] (2*\hstep,\vstep) -- (3*\hstep,2*\vstep);
  \filldraw[shift={(\hoff,0)}] (0.0,\vstep) circle (2.5pt);
  \draw[shift={(\hoff,0)}]     (\hstep-\ceps,-\ceps) rectangle (\hstep+\ceps,+\ceps);
  \draw[shift={(\hoff,0)}]     (2*\hstep,\vstep) circle (2.5pt);

  \pgfmathsetmacro\hoff{7*\hstep}
  \draw[shift={(\hoff,0)},black,thin] (0.0,2*\vstep) --  (\hstep,\vstep) -- (2*\hstep,0.0) -- (3*\hstep,\vstep) -- (4*\hstep,2*\vstep);
  \draw[shift={(\hoff,0)},black,thick,double,double distance between line centers=0.8mm,line cap=rect] (4*\hstep,2*\vstep) -- (5*\hstep,3*\vstep);
  \filldraw[shift={(\hoff,0)}] (0.0,2*\vstep) circle (2.5pt);
  \filldraw[shift={(\hoff,0)}] (\hstep,\vstep) circle (2.5pt);
  \draw[shift={(\hoff,0)}]     (2*\hstep-\ceps,-\ceps) rectangle (2*\hstep+\ceps,+\ceps);
  \draw[shift={(\hoff,0)}]     (3*\hstep,\vstep) circle (2.5pt);
  \draw[shift={(\hoff,0)}]     (4*\hstep,2*\vstep) circle (2.5pt);

  \pgfmathsetmacro\hoff{12*\hstep}
  \draw[shift={(\hoff,0)},black,thin] (0.0,3*\vstep) -- (\hstep,2*\vstep) --  (2*\hstep,\vstep) -- (3*\hstep,0.0) -- (4*\hstep,\vstep) -- (5*\hstep,2*\vstep) -- (6*\hstep,3*\vstep);
  \filldraw[shift={(\hoff,0)}] (0.0,3*\vstep) circle (2.5pt);
  \filldraw[shift={(\hoff,0)}] (\hstep,2*\vstep) circle (2.5pt);
  \filldraw[shift={(\hoff,0)}] (2*\hstep,\vstep) circle (2.5pt);
  \draw[shift={(\hoff,0)}]     (3*\hstep-\ceps,-\ceps) rectangle (3*\hstep+\ceps,+\ceps);
  \draw[shift={(\hoff,0)}]     (4*\hstep,\vstep) circle (2.5pt);
  \draw[shift={(\hoff,0)}]     (5*\hstep,2*\vstep) circle (2.5pt);
  \draw[shift={(\hoff,0)}]     (6*\hstep,3*\vstep) circle (2.5pt);

  \draw[white] (0, -\vstep) circle (2.5pt);
\end{tikzpicture}
\end{tabular}
\caption{Left: The FASCD V-cycle Algorithm \ref{alg:fascd} computes downward corrections $y_j \in \mathcal{D}^j$, but the upward corrections $z_j\in\mathcal{U}^j$ are in larger sets. Right: FMG Algorithm \ref{alg:fascd-fmg} generates the initial iterate on finer levels by injection and truncation (doubled edges).}
\label{fig:fascdcycles}
\end{figure}

We must specify the problems solved at each mesh level.  As in FAS multigrid for nonlinear PDEs \cite{BrandtLivne2011,Bruneetal2015,Trottenbergetal2001}, both the solution approximation and the residual must be coarsened, and a source functional created.  On mesh level $j$ we assume a re-discretized nonlinear operator $f^j$, with output in $(\cV^j)'$, which is an FE discretization of $f$ in \eqref{eq:vi}.  At the start of each V-cycle we assume that an admissible iterate $w^J \in \mathcal{K}^J$ has been used to generate the LDCs and constraint sets $\mathcal{D}^j$, $\mathcal{U}^j$ at each level (Section \ref{sec:cdmultilevel}); they do not depend on the coarse-level corrections.

As we descend we define the initial $j$th-level iterates
\begin{equation}
w^j = \iR(w^{j+1} + y^{j+1}),  \label{eq:fe:definew}
\end{equation}
from which we define $\ell^j \in (\cV^j)'$:
\begin{equation}
\ell^j = \begin{cases} \ell^J, & j=J \\
                       f^j(w^j) + R\left(\ell^{j+1}-f^{j+1}(w^{j+1}+y^{j+1})\right), & j<J. \end{cases} \label{eq:fe:levelsource}
\end{equation}
As is typical of FAS \cite[section 5.3]{Trottenbergetal2001}, injection $\iR$ coarsens the (primal) iterates, but canonical restriction $R$ coarsens (dual) linear functionals.  Descending through the levels $j=J,J-1,\dots,1$ we solve the following VI problems for $y^j \in \mathcal{D}^j$:
\begin{equation}
\ip{f^j(w^j + y^j)}{v-y^j} \ge \ip{\ell^j}{v-y^j} \qquad \text{for all } v\in \mathcal{D}^j. \label{eq:fe:downvi}
\end{equation}
The initial iterate is $y_{(0)}^j=0$; this is admissible because $\underline{\phi}^j \le 0 \le \overline{\phi}^j$.

On ascent, $j=0,1,\dots,J$, the same VI is solved but for $z^j \in \mathcal{U}^j$:
\begin{equation}
\ip{f^j(w^j + z^j)}{v-z^j} \ge \ip{\ell^j}{v-z^j} \qquad \text{for all } v\in \mathcal{U}^j. \label{eq:fe:upvi}
\end{equation}
However, the nontrivial initial iterate for this problem is found by prolonging $z^{j-1} \in \mathcal{U}^{j-1}$ and then updating the current (i.e.~down-smoothed) correction:
\begin{equation}
z_{(0)}^j = y^j + P z^{j-1}.  \label{eq:fe:upwardinitial}
\end{equation}
An exception is that $z_{(0)}^0=0$ on the coarsest level.  Note $z_{(0)}^j$ is admissible by \eqref{eq:fe:upwardsuminclusion}.

Formula \eqref{eq:fe:levelsource} for $\ell^j$ is a classical FAS construction; compare equation (8.5b) in \cite{BrandtLivne2011} or equation (5.3.14) in \cite{Trottenbergetal2001}.  It has the following explanation, inductively on VI problems \eqref{eq:fe:downvi}.  Because the down-smoother was only approximate, problem \eqref{eq:fe:downvi} was not solved exactly by the finer-level correction $y^{j+1} \in \mathcal{D}^{j+1}$.  Suppose we seek an improved correction by adding $y \in \mathcal{D}^{j}$, to give $y^{j+1}+y \in \mathcal{D}^{j+1}+\mathcal{D}^j \subset \cV^{j+1}$, so that
\begin{equation}
\ip{f^{j+1}(w^{j+1}+y^{j+1}+y)}{v-y} \ge \ip{\ell^{j+1}}{v-y} \quad \text{\emph{[notional]}} \label{eq:fe:downvinotional}
\end{equation}
for all $v \in \mathcal{D}^j$.  (Note $v-y = (y^{j+1}+v)-(y^{j+1}+y)$ and that nesting $\cV^j \subset \cV^{j+1}$ is assumed here.)

However, because the down-smoother has made progress, $y$ should be approximate-able by solving a coarser, $j$th-level problem generated by modifying \eqref{eq:fe:downvinotional} in 4 steps: \emph{(i)} subtract the computable quantity $f^{j+1}(w^{j+1}+y^{j+1}) \in (\mathcal{V}^{j+1})'$ from both sides, \emph{(ii)} replace the relaxed residual $\ell^{j+1}-f^{j+1}(w^{j+1}+y^{j+1})$ on the right by its restriction, \emph{(iii)} replace $w^{j+1}+y^{j+1}$, where it appears on the left, by its injection, and \emph{(iv)} where it appears on the left, replace $f^{j+1}$ by the coarser rediscretization $f^j$.  These steps yield a problem for the $j$th-level correction $y=y^j \in \mathcal{D}^j$:
\begin{align}
&\ip{f^j\left(\iR(w^{j+1}+y^{j+1})+y^j\right)}{v-y^j} - \ip{f^j\left(\iR(w^{j+1}+y^{j+1})\right)}{v-y^j} \label{eq:fe:downvicluttered} \\
&\qquad \ge \ip{R\left(\ell^{j+1}-f^{j+1}(w^{j+1}+y^{j+1})\right)}{v-y^j}, \notag
\end{align}
for all $v\in \mathcal{D}^{j}$.  Though in cluttered form, this is VI problem \eqref{eq:fe:downvi}, once definitions \eqref{eq:fe:definew} and \eqref{eq:fe:levelsource} are applied.

The explanation for \eqref{eq:fe:upvi} and \eqref{eq:fe:upwardinitial} is simpler.  Whether or not problem \eqref{eq:fe:downvi} has been exactly solved by the computed $y^j\in\mathcal{D}^j$, when going upward we can expand problem \eqref{eq:fe:downvi} into the larger set $\mathcal{U}^j \supset \mathcal{D}^j$, and \eqref{eq:fe:upwardsuminclusion} assures admissibly upward from any approximate solution to \eqref{eq:fe:upvi}.  Generally $y^j$ is not the solution to \eqref{eq:fe:upvi}; e.g.~in the optimization case $f=F'$ it would not be optimal in the enlarged constraint set.  Our current best estimate of the solution to \eqref{eq:fe:upvi} is $z^j \approx y^j + Pz^{j-1}\in \mathcal{U}^j$, so this is the initial iterate for relaxation.

These ideas come together in Algorithm \ref{alg:fascd}, \pr{fascd-vcycle}, which updates $w^J$.  The \pr{smooth} and \pr{solve} procedures are assumed to do in-place modifications of their final arguments, without modifying the others.

\begin{pseudofloat}[ht]
\begin{pseudo}
\pr{fascd-vcycle}(J,\ell^J,\underline{\gamma}^J,\overline{\gamma}^J;w^J)\text{:} \\+
    $\underline{\chi}^J, \, \overline{\chi}^J = \underline{\gamma}^J - w^J, \, \overline{\gamma}^J - w^J {\large \strut}$ \label{line:vcyclegenchifinest} \\
    for $j=J$ downto $j=1$ \\+
      $\underline{\chi}^{j-1}, \, \overline{\chi}^{j-1} = \maxR \underline{\chi}^j, \, \minR \overline{\chi}^j {\large \strut}$ \label{line:vcyclegenchi} \\
      $\underline{\phi}^j, \, \overline{\phi}^j = \underline{\chi}^j - P\underline{\chi}^{j-1}, \, \overline{\chi}^j - P\overline{\chi}^{j-1} {\large \strut}$ \label{line:vcyclegenphi} \\
      $y^j = 0$ \\
      $\text{\pr{smooth}}^{\text{\id{down}}}(\ell^j,\underline{\phi}^j,\overline{\phi}^j,w^j;y^j)$  \ct{problem \eqref{eq:fe:downvi} in $\mathcal{D}^j$}\\
      $w^{j-1} = \iR(w^j + y^j)$ \label{line:vcyclerestrictsolution} \\
      $\ell^{j-1} = f^{j-1}(w^{j-1}) + R \left(\ell^j - f^j(w^j+y^j)\right)$ \label{line:vcyclerestrictell} \\-
    $z^0 = 0$ \\
    $\text{\pr{solve}}(\ell^0,\underline{\chi}^0,\overline{\chi}^0,w^0;z^0)$ \hspace{1.0cm} \ct{problem \eqref{eq:fe:upvi} in $\mathcal{U}^0$} \\
    for $j=1$ to $j=J$ \\+
      $z^j = y^{j} + P z^{j-1}$ \label{line:vcycleupsmoothinitial} \\
      $\text{\pr{smooth}}^{\text{\id{up}}}(\ell^j,\underline{\chi}^j,\overline{\chi}^j,w^j;z^j)$  \ct{problem \eqref{eq:fe:upvi} in $\mathcal{U}^j$} \\-
    return $w^J+z^J$
\end{pseudo}
\caption{The FASCD V-cycle, an iteration for solving VI problem \eqref{eq:fe:vi}.}
\label{alg:fascd}
\end{pseudofloat}

With \id{down} $=1$ and \id{up} $=1$ smoother iterations on each level, Algorithm \ref{alg:fascd} is a $V(1,1)$ cycle.  The $V(1,0)$ cycle with \id{up} $=0$ is similar to Algorithm 4.2 in \cite{GraeserKornhuber2009}, but here it is generalized by applying FAS-type corrections and allowing any smoother.  In unilateral cases one $V(1,1)$ iteration of \pr{fascd-vcycle} approximates applications of \pr{cd-mult} (Section \ref{sec:cd}) over CD \eqref{eq:fe:downwardsuminclusion}, followed by \pr{icd-tele} (Section \ref{sec:cdmultilevel}) over \eqref{eq:fe:upwardsuminclusion}, and in this sense $V(1,1)=V(1,0)+V(0,1)$.

Regarding the storage required, if all box constraints are nontrivial then 7 vectors must be allocated on each level: $\underline{\chi}^j$, $\overline{\chi}^j$, $\underline{\phi}^j$, $\overline{\phi}^j$, $w^j$, $\ell^j$, $y^j$; note vectors $z^j$ and $y^j$ may use the same storage.  On the finest level one must also store $\underline{\gamma}^J$ and $\overline{\gamma}^J$.  Therefore, using $m_j=2^d m_{j-1}$ and a geometric series argument, the total storage is at most
\begin{equation}
9 m_J + 7 m_{J-1} + \dots + 7 m_1 + 7 m_0 \le \left(9 + \frac{7}{2^d - 1}\right) m_J,
\end{equation}
namely $16m_J$, $12m_J$, and $10m_J$ in dimensions $d=1,2,3$, respectively.  For comparison, a single-level method needs $4 m_J$ storage (i.e.~$\underline{\gamma}^J,\overline{\gamma}^J,\ell^J,w^J$).  In unilateral cases the storage can be reduced at all levels.

\section{Implementation} \label{sec:implementation}

In this section we propose a convergence criterion, construct the full multigrid (FMG) extension of Algorithm \ref{alg:fascd}, and summarize the reduce-space Newton method used as the smoother and coarse solver.

Repeated application of Algorithm \ref{alg:fascd} to solve finite-dimensional VI problem \eqref{eq:fe:vi} will not generally reduce the residual $f^J(w^J) - \ell^J$ to zero everywhere.  However, the VI is equivalent to a \emph{mixed complementarity problem} (MCP) \cite{FacchineiPang2003}, which says, by definition, that certain conditions hold for $u^J$ at nodes $x_p \in \mathcal{T}^J$.  Let $r(u^J)_p = \ip{f^J(u^J)-\ell^J}{\psi_p}$ be the ordinary (pointwise) residual.  If $x_p \notin \partial_D\Omega$ then exactly one of these four conditions must hold:
\begin{itemize}
\item $\underline{\gamma}^J(x_p)<u^J(x_p)<\overline{\gamma}^J(x_p)$ and $r(u^J)_p = 0$,
\item $\underline{\gamma}^J(x_p)=u^J(x_p)<\overline{\gamma}^J(x_p)$ and $r(u^J)_p \ge 0$,
\item $\underline{\gamma}^J(x_p)<u^J(x_p)=\overline{\gamma}^J(x_p)$ and $r(u^J)_p \le 0$, or
\item $\underline{\gamma}^J(x_p)=u^J(x_p)=\overline{\gamma}^J(x_p)$.
\end{itemize}
Otherwise, if $x_p \in \partial_D\Omega$ then
\begin{itemize}
\item $u^J(x_p)=g_D^J(x_p)$.
\end{itemize}
These 5 cases might be labeled \emph{inactive}, \emph{lower active}, \emph{upper active}, \emph{pinched} (\emph{both active}), and \emph{boundary}, respectively.  Where \emph{pinched} or \emph{boundary} apply, $r(u^J)_p$ can have any value.

This MCP interpretation allows us to construct expressions whose norms should approach zero.  Define the Fischer-Burmeister function \cite{BensonMunson2006,Ulbrich2011} on $a,b\in\RR$:
\begin{equation}
\phi(a,b) = a + b - \sqrt{a^2 + b^2}. \label{eq:phiFB}
\end{equation}
Observe that $\phi$ is semi-smooth, thus continuous, and that $\phi(a,b)=0$ if and only if $a,b$ are complementary, i.e.~$a\ge 0$, $b\ge 0$, and $ab=0$.  For any $w^J\in \mathcal{K}^J$ denote $\underline{g}_p = w^J(x_p) - \underline{\gamma}^J(x_p)$ and $\overline{g}_p = \overline{\gamma}^J(x_p) - w^J(x_p)$, the nonnegative \emph{gaps} at $x_p$.  Now define the \emph{semi-smooth residual}
\begin{equation}
\rSS(w^J)_p = \begin{cases}
\max\big\{\phi\big(\underline{g}_p, r(w^J)_p\big), \phi\left(\overline{g}_p, -r(w^J)_p\right)\big\}, & x_p \notin \partial_D\Omega, \\
w^J(x_p) - g_D^J(x_p), & x_p \in \partial_D\Omega.
\end{cases} \label{eq:rSS}
\end{equation}
Straightforward modifications apply for unilaterally-constrained or unconstrained degrees of freedom, where $\underline{\gamma}^J(x_p) = -\infty$, $\overline{\gamma}^J(x_p) = +\infty$, or both.  VI problem \eqref{eq:fe:vi} is exactly solved by $u^J\in\cK^J$ if and only if $\rSS(u^J)=0$ identically.

Suppose that Algorithm \ref{alg:fascd} is started with $w^{J,0} \in \mathcal{K}^J$ and generates (fine-level) iterates $w^{J,k} \in \mathcal{K}^J$.  Let $\id{atol}$, $\id{rtol}$, $\id{stol} \ge 0$ be given absolute, relative, and step tolerances, respectively.  We employ
\begin{equation}
\|\rSS(w^{J,k})\|_2 < \id{atol} \quad \text{or} \quad \frac{\|\rSS(w^{J,k})\|_2}{\|\rSS(w^{J,0})\|_2} < \id{rtol} \quad \text{or} \quad \frac{\|\Delta w^{J,k}\|_{L^2}}{\|w^{J,k}\|_{L^2}} < \id{stol}, \label{eq:stoppingcriterion}
\end{equation}
where $\Delta w^{J,k} = w^{J,k} -  w^{J,k-1}$, $\|\cdot\|_2$ is the euclidean norm on $\RR^{m_J}$, and $\|\cdot\|_{L^2}$ is the norm on $L^2(\Omega)$, as the stopping criterion.  Defaults in our Firedrake implementation match those of PETSc SNES \cite{Balayetal2023}: $\id{atol}=10^{-50}$, $\id{rtol}=\id{stol}=10^{-8}$; see Section \ref{sec:results} for specific settings in Examples.

In Section \ref{sec:results} we will show that V-cycle iterations alone, starting with some finest-mesh initial iterate $w^{J,0}$ and repeated until \eqref{eq:stoppingcriterion} holds, serve as an effective solver.  However, an FMG cycle, Algorithm \ref{alg:fascd-fmg} below and Figure \ref{fig:fascdcycles} (right), is even better.  The well-known principle \cite[section 2.6]{Trottenbergetal2001} is to start on the coarsest mesh and solve the problem from any initial iterate $w^0\in\mathcal{K}^0$; one may generate it by injection and projection/truncation of $w^{J,0}$.  Then an admissible initial iterate on the next-finer mesh is generated via prolongation.  However, in the VI context one must also project (truncate) to the bounds after the prolongation; see lines \ref{line:fcycleprolongtruncone} and \ref{line:fcycleprolongtrunctwo} in Algorithm \ref{alg:fascd-fmg}.  During this ``ramp'' stage one approximately solves on each level using a fixed number (\id{rampv}) of V-cycles, each down to the coarsest mesh.  When the ramp is complete, one does full-depth V-cycles to convergence.

\begin{pseudofloat}[h]
\begin{pseudo}
\pr{fascd-fmg}(J,\ell^J,\underline{\gamma}^J,\overline{\gamma}^J)\text{:} \\+
    for $j=J$ downto $j=1$ \\+
        $\ell^{j-1} = R(\ell^j)$ \\
        $\underline{\gamma}^{j-1}, \, \overline{\gamma}^{j-1} = \iR \underline{\gamma}^{j}, \, \iR \overline{\gamma}^{j}$ {\large\strut} \\-
    choose $w^0 \in \mathcal{K}^0$ \label{line:fcyclecoarsestinitial} \\
    $\text{\pr{solve}}(\ell^0,\underline{\gamma}^0,\overline{\gamma}^0;w^0)$ \\
    for $j=1$ to $j=J-1$ \\+
        $w^j = \max\left\{\underline{\gamma}^{j},\min\{\overline{\gamma}^{j}, Pw^{j-1}\}\right\} \in \mathcal{K}^j$ \label{line:fcycleprolongtruncone} \\
        $\pr{fascd-vcycle}^{\text{\id{rampv}}}(j,\ell^j,\underline{\gamma}^j,\overline{\gamma}^j;w^j)$ \\-
    $w^J = \max\left\{\underline{\gamma}^{J},\min\{\overline{\gamma}^{J}, Pw^{J-1}\}\right\} \in \mathcal{K}^J$ \label{line:fcycleprolongtrunctwo} \\
    while not \eqref{eq:stoppingcriterion} \\+
        \pr{fascd-vcycle}(J,\ell^J,\underline{\gamma}^J,\overline{\gamma}^J;w^J) \\-
    return $w^J$
\end{pseudo}
\caption{The FASCD full multigrid (FMG) cycle for solving VI problem \eqref{eq:fe:vi}.}
\label{alg:fascd-fmg}
\end{pseudofloat}

In contrast to the construction of LDCs (Section \ref{sec:cdmultilevel}) for V-cycles, during the ramp standard injection is used to generate the constraints $\underline{\gamma}^j,\overline{\gamma}^j$ on each level because a full solution iterate is initially needed, not just a correction.  Essentially, our choice is between \eqref{eq:fe:badstandard}, where the coarse constraint set is nonempty but then the prolonged solution may not be admissible, or \eqref{eq:fe:badmonotone}, where the constraint set may be empty.  We choose \eqref{eq:fe:badstandard}, with nonempty sets, but we must project (truncate) pointwise to the bounds, potentially adding high-frequency noise to the initial iterate.

A large space of smoothers could be explored, but in the next Section we will only apply an reduced-space Newton method \cite{Balayetal2023,BensonMunson2006}, denoted as \emph{RS Newton}.  A fixed number of preconditioned Krylov iterations approximately solve the arising linear systems, thus $O(m_j)$ work is done per smoother application.  For the symmetric Laplacian (Example \ref{ex:results:classical}) we apply conjugate gradients (CG) with incomplete Cholesky (IC) preconditioning, but otherwise generalized minimum residual (GMRES) with incomplete LU (ILU) preconditioning is used.  The coarse solver is also RS Newton, iterated to convergence using the MUMPS sparse direct linear solver~\cite{Amestoy2001}.

\section{Results} \label{sec:results}

For $P_1$ and $Q_1$ elements, FASCD Algorithms \ref{alg:fascd} and \ref{alg:fascd-fmg} were implemented in Python using the Firedrake library \cite{Rathgeberetal2016}.  The numerical results in this Section, which follow the Section \ref{sec:vi} examples, focus on the scaling of iterations with mesh resolution.  We see that each V-cycle greatly reduces the residual norm, and that a few FMG iterations solve most of these problems to within discretization error.

\begin{example} \label{ex:results:classical}
Consider the classical unilateral 2D Laplacian obstacle problem, namely Example \ref{ex:plaplacian} with $p=2$ and admissible set $\mathcal{K} = \{v \ge \psi\}$.  We consider two cases, over square domains $\Omega$, with different coincidence (active) sets $\{x\in\Omega \,:\, u(x)=\psi(x)\}$ (Figure \ref{fig:results:classical}).  The standard ``ball'' problem has an upper hemisphere obstacle \cite[Chapter 12]{Bueler2021}, but the ``spiral'' problem's obstacle is constructed to produce the coincidence set shown \cite[problem 7.1.1]{GraeserKornhuber2009}.  For both cases we used a regular coarsest mesh with $m_0=41$ nodes (64 triangles), and the finest mesh had $m_{10} \approx 3.4 \times 10^7$ nodes.  A single RS Newton iteration with three IC-preconditioned CG iterations was used as the smoother, and the initial iterate was $w=\max\{0,\psi\}$.

Table \ref{tab:results:classical} shows the resulting iteration counts.  For a first experiment we over-solve, with \texttt{atol} $=$ \texttt{rtol} $=$ \texttt{stol} $= 10^{-12}$ in stopping criterion \eqref{eq:stoppingcriterion}, to facilitate comparison to results in \cite[section 7]{GraeserKornhuber2009}.  As expected, RS Newton as a solver shows rapidly-increasing iterations, roughly proportional to resolution.  By contrast, V-cycle iterations increase slowly, perhaps logarithmically with the resolution.  For the spiral problem, \cite[Figure 7.11]{GraeserKornhuber2009} reports that a $m_J \approx 5.2 \times 10^5$ problem needs approximately 25 V-cycle iterations for the best (truncated;  ``TNMG'' and ``TMMG'') monotone multilevel algorithms.\footnote{The stopping criterion and asymptotic convergence rates in \cite{GraeserKornhuber2009} are based on norm differences from a last numerical iterate with unknown accuracy.  Our method directly measures satisfaction of the VI, via the residual norm $\|\rSS(w)\|_2$.}  The 12 V-cycle iterations needed here, at comparable resolution (8 levels), represent somewhat more arithmetic because our smoother is RS Newton using CG+IC, rather than the simpler projected Gauss-Seidel iteration.  Resolution-dependent asymptotic convergence rates for V-cycles, defined as $\rho_J=(\|\rSS(w^{J,k})\|_2/\|\rSS(w^{J,0})\|_2)^{1/k}$ where $w^{J,k}$ is the first iterate to satisfy \eqref{eq:stoppingcriterion}, are shown in Figure \ref{fig:results:asymp}.  Both iterations and rates are similar for the ball and spiral problems; the geometric complexity of the coincidence set and free boundary minimally influences FASCD V-cycle performance.  Contrast the rate of about $0.1$ for 8 levels here with about $0.4$ reported by \cite[Figure 7.11]{GraeserKornhuber2009} for TNMG/TMMG at comparable resolution.  These rates are all excellent; our better rate mostly reflects a stronger smoother.

For FMG we instead used the defaults \texttt{rtol} $=$ \texttt{stol} $= 10^{-8}$, sufficient to reach discretization error, and in Table \ref{tab:results:classical} we report the number of V-cycles after the ramp; a small, constant number solves the problem.  Since each V-cycle does the work of about eight finest-level Krylov iterations, the ball results show nearly textbook multigrid efficiency at the highest resolutions despite the low solution regularity.  More V-cycles are needed to clean-up error remaining after the ramp in the spiral problem.
\end{example}

\begin{figure}[ht]
\centering
\includegraphics[width=0.35\textwidth]{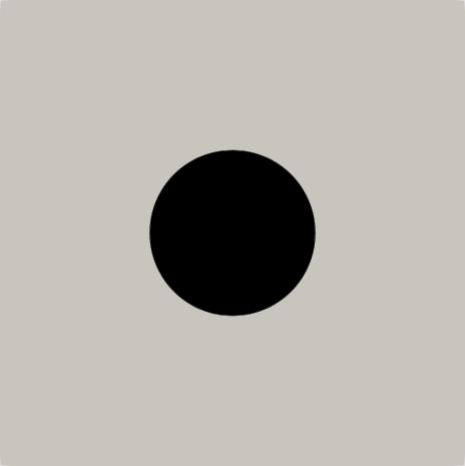} \qquad \includegraphics[width=0.35\textwidth]{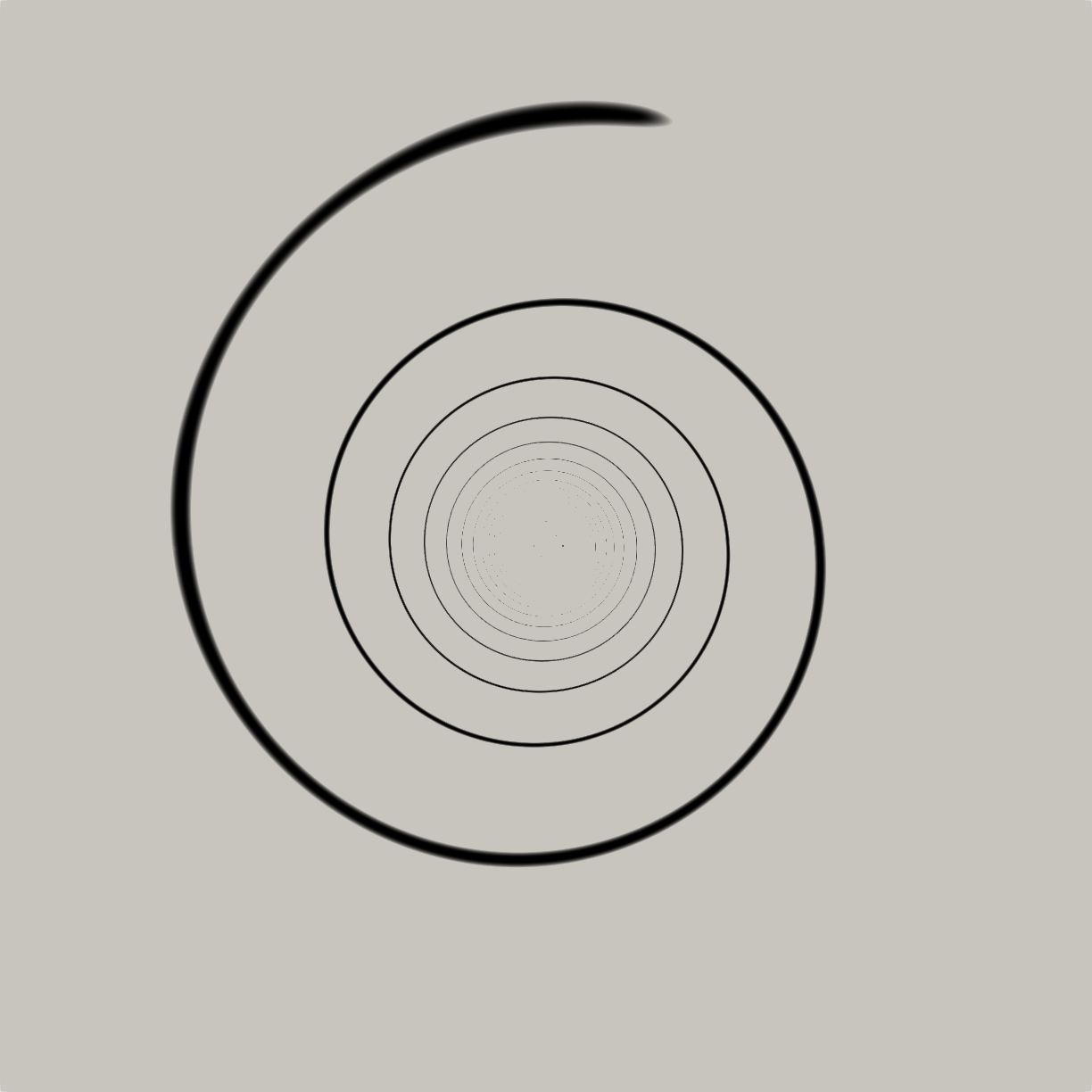}
\caption{Coincidence sets for the ball and spiral solutions in Example \ref{ex:results:classical}.}
\label{fig:results:classical}
\end{figure}

\begin{table}[ht]
\centering
\begin{tabular}{lr@{\hskip 7mm}cccccc}
\toprule
\multirow{2}{*}{\emph{levels}} & \multirow{2}{*}{$m_J$} & \multicolumn{3}{c}{\,\emph{ball}} & \multicolumn{3}{c}{\,\emph{spiral}} \\ \cmidrule(lr){3-5} \cmidrule(lr){6-8}
   &                  & RS only & V-cycle & FMG & RS only & V-cycle & FMG \\
\midrule
 1 &             $41$ &   1 &  1 &  1 &   2 &  1 &  1 \\
 2 &            $145$ &   1 &  3 &  2 &   4 &  4 &  2 \\
 3 &            $545$ &   3 &  6 &  3 &   6 &  5 &  3 \\
 4 & $2.1\times 10^3$ &   6 &  7 &  3 &   7 &  7 &  4 \\
 5 & $8.3\times 10^3$ &  13 &  9 &  4 &  11 &  9 &  4 \\
 6 & $3.3\times 10^4$ &  23 & 11 &  5 &  20 & 10 &  5 \\
 7 & $1.3\times 10^5$ &  46 & 11 &  4 &  35 & 11 &  5 \\
 8 & $5.3\times 10^5$ &  90 & 12 &  4 &  67 & 12 &  5 \\
 9 & $2.1\times 10^6$ & \XX & 13 &  3 & \XX & 13 &  6 \\
10 & $8.4\times 10^6$ & \XX & 13 &  2 & \XX & 14 &  6 \\
11 & $3.4\times 10^7$ & \XX & 14 &  2 & \XX & 18 &  6 \\
\bottomrule
\end{tabular}
\bigskip
\caption{Iterations for FASCD V-cycle and FMG algorithms in Example \ref{ex:results:classical}; see text regarding tolerances.  Runs marked \XX\xspace were not attempted.}
\label{tab:results:classical}
\end{table}

\begin{figure}[ht]
\centering
\includegraphics[width=0.65\textwidth]{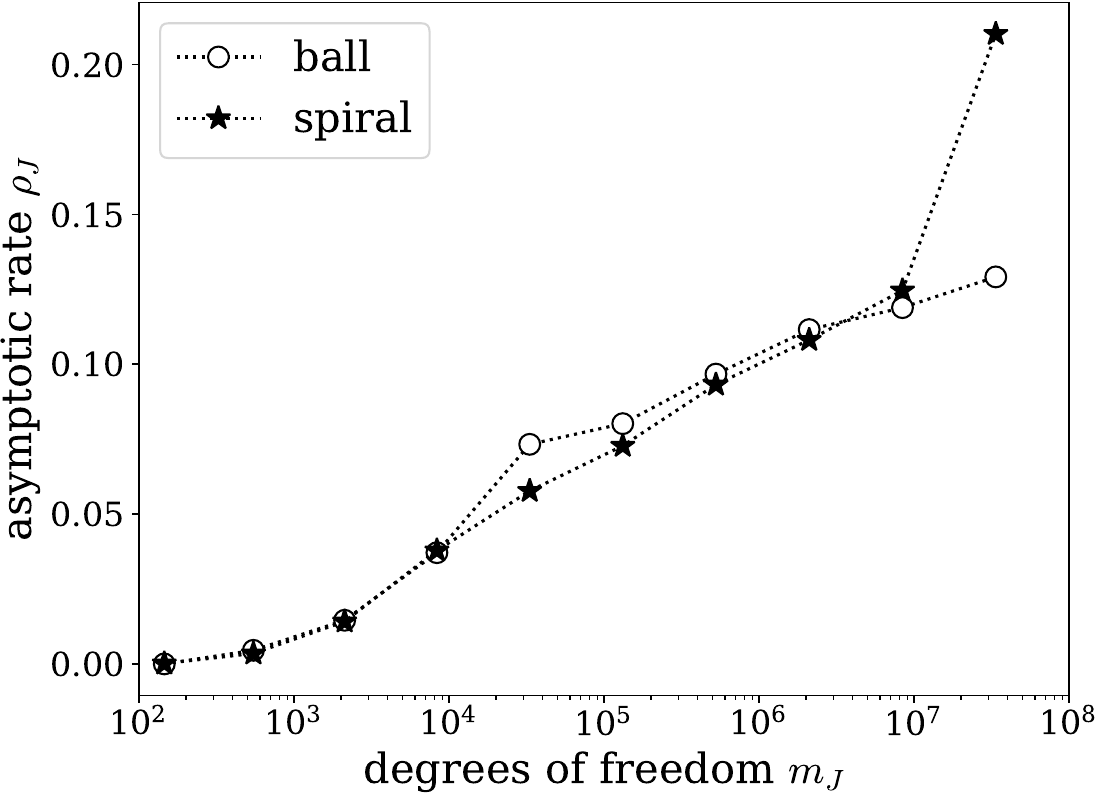}
\caption{V-cycle asymptotic convergence rates in Example \ref{ex:results:classical}.}
\label{fig:results:asymp}
\end{figure}

\begin{example}  \label{ex:results:plap}
We again consider Example \ref{ex:plaplacian}, with a unilateral constraint, but now for a nonlinear $p$-Laplacian \cite{ChoeLewis1991} in one dimension.  Setting $p=1.5$ yields a fast diffusion ($D=|\grad u|^{p-2}\to \infty$ as $|\grad u|\to 0$).  Note that Example \ref{ex:results:sia} below, by constrast, has a doubly-nonlinear operator with $p=4$ degenerate diffusion.

This example reveals the strong contrast between $V(1,0)$ and $V(0,1)$ results (\pr{up}, \pr{down} $=0$ in Algorithm \ref{alg:fascd}, respectively).  Set $\Omega=(-3,3)$, $\psi(x) = -0.2|x|$, $\mathcal{K} = \{v \ge \psi\} \subset W^{1,p}(\Omega)$, and $\ip\ell v = \int_\Omega g v$, where $g(x)=1$ for $|x|<1$ and $g(x)=-1$ for $|x|>1$, in VI \eqref{eq:vi}.  The exact solution is easily computed for any $p$; see the codes accompanying this paper.  For this experiment the coarsest mesh has $m_0=7$ nodes and the finest $m_9=3073$.  The default smoother uses three iterations of RS Newton, with LU soluton of the linear systems.  We set \texttt{rtol} $= 10^{-6}$ and \texttt{atol} $=$ \texttt{stol} $= 10^{-12}$.  Figure \ref{fig:imagesvcycle} illustrates the major steps of a V-cycle, showing each of the coarse-correction VI subproblems and solutions; the result $\tilde w^2$ is visually close to the exact solution.

\begin{figure}[ht]
\centering
\input{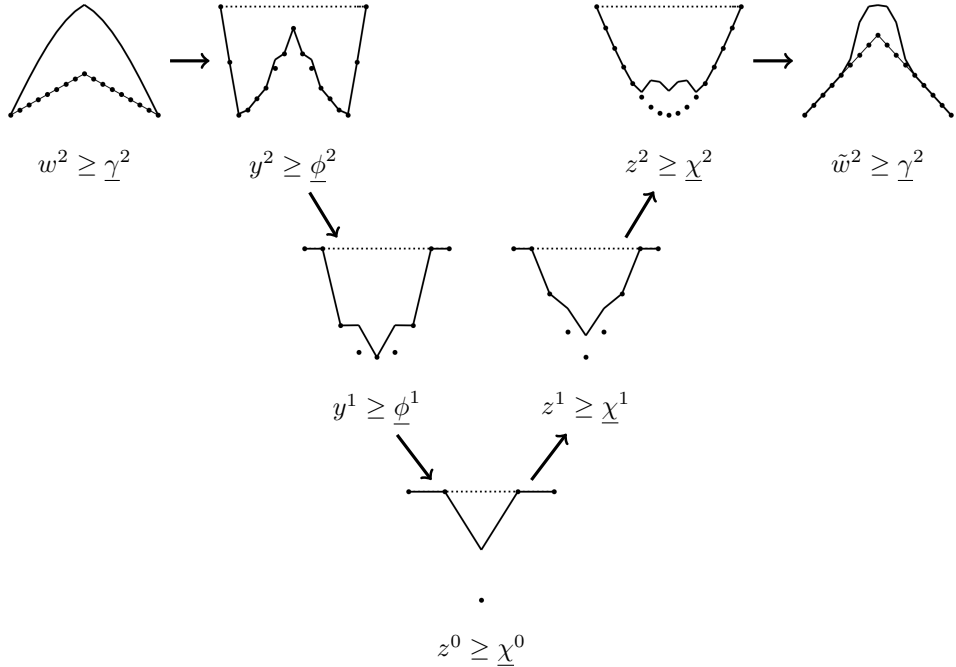}
\caption{Visualization of one FASCD V-cycle iteration for $p$-Laplacian Example \ref{ex:results:plap}, with $J=3$ and a coarsest mesh of $m_0=5$ vertices.  Solutions (i.e.~$w^j$, $y^j$, $z^j$) are lines, obstacles ($\underline{\gamma}^j$, $\underline{\phi}^j$, $\underline{\chi}^j$) are strong dots, and the zero function is dotted.  Vertical scale varies.}
\label{fig:imagesvcycle}
\end{figure}

Iteration results are shown in Table \ref{tab:results:fastplap1d}.  We see that $V(1,0)$ cycles, Algorithm 4.2 in \cite{GraeserKornhuber2009} but with a stronger smoother, often do not converge.  By contrast, $V(0,1)$ cycles, up-smoothing in the larger sets $\mathcal{U}^j$ (Section \ref{sec:cdmultilevel}), are as good as the default $V(1,1)$ cycles (Algorithm \ref{alg:fascd}).  FMG is even more efficient; a single iteration suffices at all higher resolutions.   Note that convergence is at rate $O(h^{2.0})$; compare \cite{ChoeLewis1991}.
\end{example}

\begin{table}[ht]
\centering
\begin{tabular}{lr@{\hskip 7mm}c@{\hskip 3mm}c@{\hskip 3mm}c@{\hskip 4mm}c@{\hskip 6mm}c}
\toprule
\emph{levels} & $m_J$ & V(1,0) & V(0,1) & V(1,1) & FMG & $\|$error$\|_\infty$ \\
\midrule
 1 &    $7$ &  1 &  1 &  1 &  1 & $2.3\times 10^{-1}$ \\
 2 &   $13$ &  3 &  3 &  2 &  2 & $3.3 \times 10^{-2}$ \\
 3 &   $25$ & 12 &  4 &  2 &  1 & $9.1 \times 10^{-3}$ \\
 4 &   $49$ & NC &  4 &  2 &  1 & $3.2 \times 10^{-3}$ \\
 5 &   $97$ & NC &  3 &  3 &  1 & $5.5 \times 10^{-4}$ \\
 6 &  $193$ & NC &  3 &  3 &  1 & $1.7 \times 10^{-4}$ \\
 7 &  $385$ & NC &  3 &  3 &  1 & $4.7 \times 10^{-5}$ \\
 8 &  $769$ & NC &  4 &  7 &  1 & $9.5 \times 10^{-6}$ \\
 9 & $1537$ & NC &  5 &  6 &  1 & $3.6 \times 10^{-6}$ \\
10 & $3073$ & NC & 11 & 19 &  1 & $4.9 \times 10^{-7}$ \\
\bottomrule
\end{tabular}
\bigskip
\caption{Iterations for V-cycles and FMG in Example \ref{ex:results:plap}.  NC $=$ did not converge after 50 iterations.}
\label{tab:results:fastplap1d}
\end{table}

Firedrake applications can exploit the PETSc \cite{Balayetal2023} library's parallel solvers.  The above Examples, which were run in serial to fix the smoother, also run efficiently in parallel.  The next Examples apply parallel solvers and investigate the weak scaling of FASCD FMG.

\begin{example}  \label{ex:results:advdiff}
Consider the advection-diffusion VI problem in Example \ref{ex:advectiondiffusion} with $\Omega=(-1,1)^d$, $d=2,3$, diffusivity $\eps=0.1$ in \eqref{eq:advectiondiffusionstrong}, and zero Dirichlet boundary conditions.  The 2D problem satisfies the coercivity hypothesis, with a divergence-free velocity field $\bX = (7+5y,-5x)$, but in 3D we set $\bX = (7+5y,-5x,2z)$ so $\grad\cdot\bX \ne 0$.  The discontinuous source term $\phi \in L^\infty(\Omega)$ satisfies $\phi\ge 0$ in half of the domain, with positive circular patches, but values are negative in the other half; details are in the associated code.  While the other Examples in this Section are unilateral, here the solution is box-constrained: $0 \le u \le 1$.  Figure \ref{fig:results:advdiff} shows coincidence sets for the 2D problem; the 3D sets are comparable in regularity and general pattern.

\begin{figure}[ht]
\centering
\includegraphics[width=0.35\textwidth]{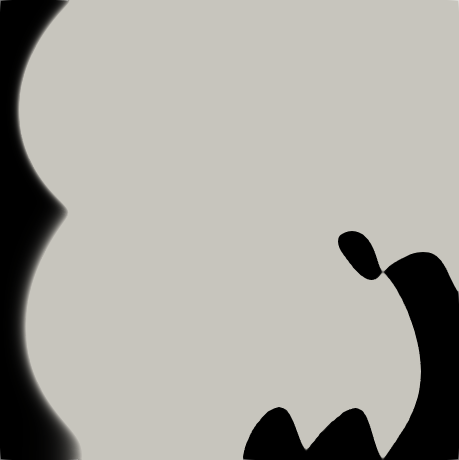} \qquad \includegraphics[width=0.35\textwidth]{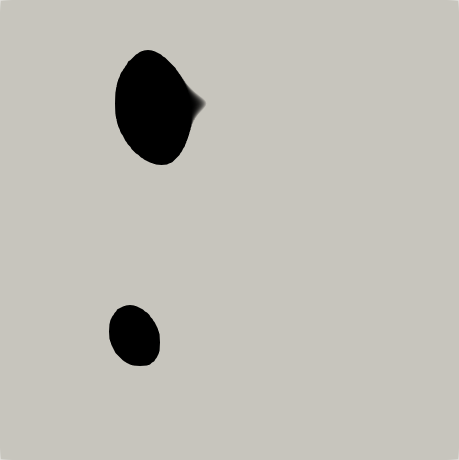}
\caption{Coincidence sets for the 2D results in Example \ref{ex:results:advdiff}: $u=0$ (left) and $u=1$ (right).}
\label{fig:results:advdiff}
\end{figure}

The 2D solver applied $P_1$ elements on a single core, but in 3D the elements were chosen to be $Q_1$ (hexahedra) on $16$ cores.  A coarse mesh with $m_0=16^d$ vertices minimally-resolved the coincidence set.  The local Péclet number $\text{Pe}=\|\bX(x,y,z)\|_2 L \eps^{-1}$, with $L=2$ the width of $\Omega$, varied from 40 to 260, and the mesh Péclet number $m_J^{-1/d}\,\text{Pe}$ varied further with resolution, so the $m_J\le 121^d$ problems were advection-dominated, and those with $m_J\ge 481^d$ diffusion-dominated.  However, the smoother was the same in all cases: two (fixed) RS Newton iterations, with three preconditioned (additive Schwarz method (ASM) with ILU on each process) GMRES iterations.  Tolerances were set to \id{rtol} $=10^{-5}$ and \id{atol} $=$ \id{stol} $= 10^{-9}$.  FMG iteration results are shown in Table \ref{tab:results:advdiff}; a few iterations sufficed up to these memory-limited resolutions.
\end{example}

\begin{table}[ht]
\centering
\begin{tabular}{lr@{\hskip 7mm}c@{\hskip 4mm}c}
\toprule
\emph{levels} & $m_J$ & $d=2$ & $d=3$ \\
\midrule
 1 &    $16^d$ & 1 & 1 \\
 2 &    $31^d$ & 2 & 2 \\
 3 &    $61^d$ & 2 & 2 \\
 4 &   $121^d$ & 2 & 2 \\
 5 &   $241^d$ & 2 & 3 \\
 6 &   $481^d$ & 2 \\
 7 &   $961^d$ & 3 \\
 8 &  $1921^d$ & 3 \\
 9 &  $3841^d$ & 3 \\
\bottomrule
\end{tabular}
\bigskip
\caption{FMG iterations for 2D ($P_1$, serial) and 3D ($Q_1$, 16 processes) solutions of the advection-diffusion bi-obstacle VI problem in Example \ref{ex:results:advdiff}.}
\label{tab:results:advdiff}
\end{table}

\begin{example}   \label{ex:results:sia}
Consider the steady shallow ice approximation (SIA) obstacle problem (Example \ref{ex:sia}), in which the surface elevation is constrained to exceed the bedrock elevation, $s\ge b$, on $\Omega=(0,L)^2$ with $L=1800$ km.  The data consist of a radial surface mass balance function $a$ \cite[equation (5.122)]{GreveBlatter2009} and a bumpy, synthetic map $b\in C^1(\Omega)$, with an elevation range of $1800$ m; details are in the associated code.  A numerical solution is shown in Figure \ref{fig:results:siascene}, colored by the solution surface flow speed, the magnitude of $\mathbf{u} = (n+1)(n+2)^{-1} \Gamma (s-b)^{n+1} |\grad s|^{n-1} \grad s$.  Note $|\grad s|$ is unbounded at the free boundary; the solution $s$, while continuous, has very low regularity.

\begin{figure}[ht]
\centering
\includegraphics[width=1.0\textwidth]{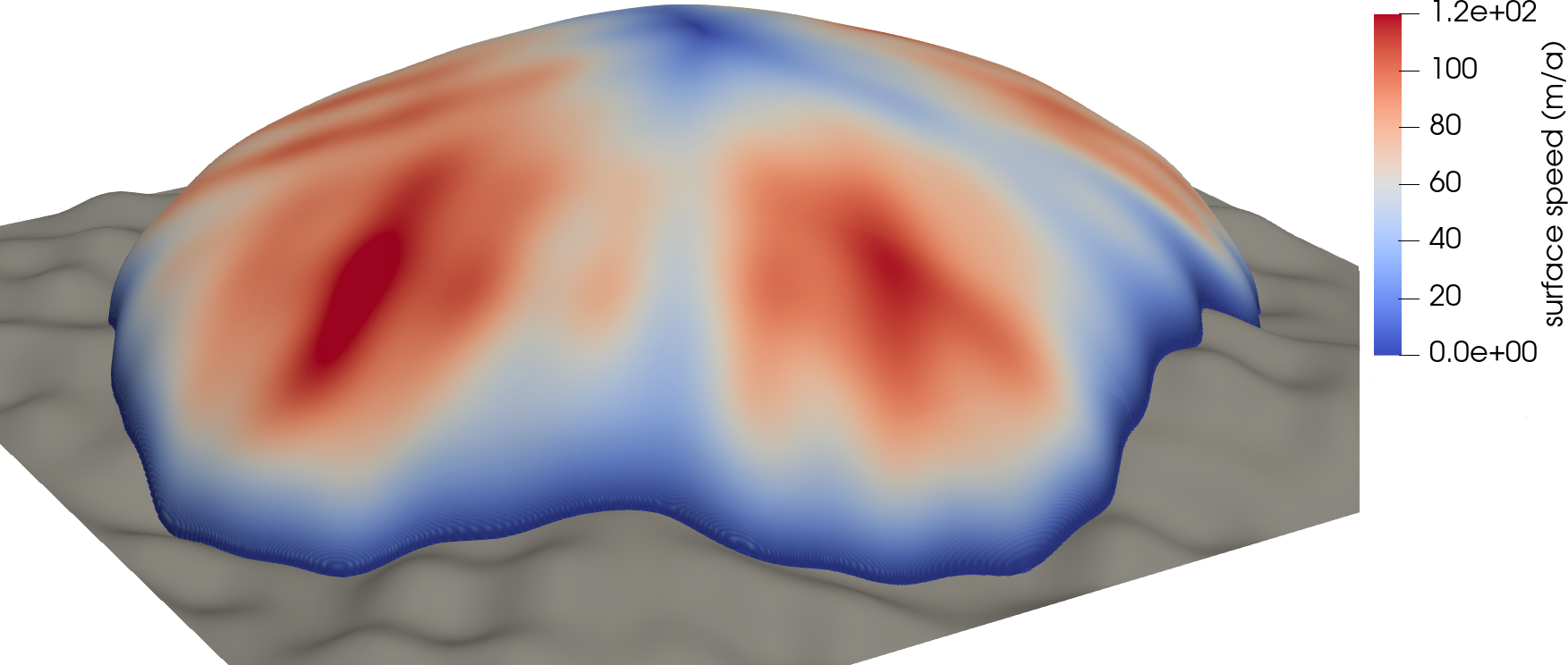}
\caption{Example \ref{ex:results:sia} solution over a bumpy bedrock topography (grey), with 100-times vertical exaggeration and surface coloring by flow speed.}
\label{fig:results:siascene}
\end{figure}

We applied FASCD FMG using $Q_1$ elements and a smoother of four (fixed) iterations of RS Newton (Section \ref{sec:implementation}), with quadratic backtracking line search.  The arising linear systems were approximately solved by three (fixed) iterations of preconditioned GMRES and an ASM+ILU preconditioner.  The coarse mesh was relatively fine, with $m_0=21^2$ vertices and 400 cells.  Because of the low regularity of the solution we used \id{rtol} $= 2 \times 10^{-4}$ and \id{atol} $= 10^{-8}$.  The initial iterate was zero ice: $s=b$.  These choices generated a solver with mesh-independent iterations.

For a weak-scaling study we fixed the number of degrees of freedom per process at $m_J/P=641^2 \approx 4.1 \times 10^5$ and solved with $6, \dots, 11$ levels.\footnote{The experiments were conducted on ARCHER2, the UK national supercomputer. Each node has 16 memory channels, so a maximum of 16 cores per node were employed.}  The results in Table \ref{tab:results:siaweak} show some degradation in time per cycle with increasing core counts, perhaps due to the use of one-level additive Schwarz in the Krylov iteration.  Nevertheless, the finest mesh result, with $m_J=20481^2=4.1 \times 10^8$ degrees of freedom, required less than three minutes to solve for the steady surface elevation of an ice sheet larger than the current Greenland ice sheet ($1.7$ million $\text{km}^2$) at uniform $87.5$ m resolution.

\begin{table}[ht]
\centering
\begin{tabular}{c@{\hskip 4mm}c@{\hskip 4mm}c@{\hskip 7mm}c@{\hskip 4mm}cc}
\toprule
$P$ & \emph{levels} & $m_J$ & \emph{iterations} & \emph{time} (s) & \emph{time$/$iteration} (s) \\
\midrule
 1 & 6 & $641^2$ & 3 & 98 & 32.7 \\
 4 & 7 & $1281^2$ & 3 & 98 & 32.7 \\
 16 & 8 & $2561^2$ & 3 & 124 & 41.3 \\
 64 & 9 & $5121^2$ & 3 & 136 & 45.3 \\
 256 & 10 & $10241^2$ & 2 & 95 & 47.5 \\
 1024 & 11 & $20481^2$ & 2 & 177 & 88.5 \\
 \bottomrule
\end{tabular}
\bigskip
\caption{Iterations and wall clock time for parallel FASCD FMG solutions of Example \ref{ex:results:sia} on $P$ processes.}
\label{tab:results:siaweak}
\end{table}
\end{example}

\section{Discussion and outlook} \label{sec:discussion}

As a multilevel solver for VI problems, the FASCD method is a strategy for generating coarser-level problems which, once solved, make helpful admissible additions to a V-cycle update.  The method is smoother-agnostic, and our implementation within the extensible Firedrake \cite{Rathgeberetal2016} and PETSc \cite{Balayetal2023} library framework will allow easy experimentation with other smoothers (projected Gauss--Seidel, semi-smooth Newton, interior-point, etc.), beyond the active-set Newton method chosen for demonstration here.  For smoothers of Newton-Krylov type one could also add linear geometric or algebraic multigrid preconditioning \cite{Trottenbergetal2001} for the linear step problems, instead of our simpler choice of incomplete factorization preconditioners.  In a different direction, a restructured multilevel CD approach which applies the additive/parallel CD iteration \cite{Tai2003} might be considered, in pursuit of a VI algorithm analogous to the BPX multigrid method for PDEs \cite{BramblePasciakXu1990}.  In any case, the performance observed in Section \ref{sec:results} is already excellent on the tested examples.

\subsection*{Code availability} \label{sec:code}  The software used to produce the results is archived at tag \texttt{v1.0} in the repository \texttt{\url{https://bitbucket.org/pefarrell/fascd/}}, using the Firedrake version \texttt{3fb16ad47}.

\bibliographystyle{siamplain}
\bibliography{fascd}

\end{document}